\newcommand{\remove}[1]{}
\newtheorem{theorem}{Theorem}[section]
\newtheorem{claim}[theorem]{Claim}
\newtheorem{lemma}[theorem]{Lemma}
\newtheorem{definition}[theorem]{Definition}
\newtheorem{corollary}[theorem]{Corollary}
\newtheorem{example}[theorem]{Example}
\newtheorem{conjecture}[theorem]{Conjecture}
\newcommand{\eps}{\varepsilon}
\newcommand{\sprod}[2]{\langle{#1},{#2}\rangle}
\newcommand{\spec}[2]{\text{Spec}_{#1}({#2})}
\newcommand{\E}{\mathbb{E}}
\newcommand{\C}{\mathbb{C}}
\newcommand{\Z}{\mathbb{Z}}
\title{On the structure of the spectrum of small sets}
\author{
Kaave Hosseini
\thanks{Supported by NSF CAREER award 1350481.}
\\
Computer Science and Engineering\\
University of California, San Diego\\
\texttt{skhossei@ucsd.edu}
\and
Shachar Lovett
\thanks{Supported by NSF CAREER award 1350481.}\\
Computer Science and Engineering\\
University of California, San Diego\\
\texttt{slovett@ucsd.edu}
}
\begin{document}
\maketitle

\begin{abstract}
Let $G$ be a finite abelian group and $A$ a subset of $G$. The spectrum of $A$ is the set of its large Fourier coefficients.
Known combinatorial results on the structure of spectrum, such as Chang's theorem, become trivial in the regime $|A| = |G|^\alpha $ whenever $\alpha \le c$, where $c \ge 1/2$ is some absolute constant. On the other hand, there are statistical results, which apply only to a noticeable fraction of the elements, which give nontrivial bounds even to much smaller sets.
One such theorem (due to Bourgain) goes as follows. For a noticeable fraction of pairs $\gamma_1,\gamma_2 $ in the spectrum, $\gamma_1+\gamma_2$
 belongs to the spectrum of the same set with a smaller threshold.
Here we show that this result can be made combinatorial by restricting to a large subset. That is, we show that for any set $A$ there exists a large subset $A'$,
such that the sumset of the spectrum of $A'$ has bounded size. Our results apply to sets of size $|A| = |G|^{\alpha}$ for any constant $\alpha>0$, and even in some sub-constant regime.
\end{abstract}

\section{Introduction}

Let $G$ be a finite Abelian group, and let $A$ be a subset of $G$. For a character $\gamma \in \widehat{G}$, the corresponding Fourier coefficient of $1_A$ is
$$
\widehat{1_A}(\gamma) = \sum_{x\in A} \gamma(x).
$$
The spectrum of $A$ is the set of characters with large Fourier coefficients,
$$
\spec{\eps}{A} = \{\gamma\in \widehat{G}: |\widehat{1_A}(\gamma)|\geq \eps |A| \}.
$$
Note that the spectrum of a set is a symmetric set, that is $\spec{\eps}{A} = - \spec{\eps}{A}$, where we view $\widehat{G}$ as an additive group (which is isomorphic to $G$).

Understanding the structure of the spectrum of sets is an important topic in additive combinatorics, with several striking applications discussed below. As we illustrate, there is a gap in our knowledge between \emph{combinatorial} structural results, which apply to all elements in the spectrum, and \emph{statistical} structural results, which apply to most elements in the spectrum. The former results apply only to large sets, typically of the size $|A| \ge |G|^c$ for some absolute constant $c>0$, where the latter results apply also for smaller sets. The goal of this paper is to bridge this gap.

Our interest in this problem originates from applications of it in computational complexity, where a better understanding of the structure of the spectrum of small sets can help to shed light on some of the main open problems in the area, such as constructions of two source extractors~\cite{bourgain2005more,rao2007exposition,BenSassonZewi11} or the log rank conjecture in communication complexity~\cite{BenSassonLovettZewi11}. We refer the interested reader to a survey by the second author on applications of additive combinatorics in theoretical computer science~\cite{lovett2014survey}. In this paper we focus on the core mathematical problem, and do not discuss applications further.

We assume from now on that $|A| = |G|^{\alpha}$ where $\alpha>0,\eps>0$ are arbitrarily small constants, which is the regime where current techniques fail. In fact, 
our results extend to some range of sub-constant parameters, but only mildly. First, we review the current results on the structure of the spectrum, and their limitations.

\paragraph{Size bound.}
The most basic property of the spectrum is that it cannot be too large.
Parseval's identity bounds the size of the spectrum by
$$
|\spec{\eps}{A}|\leq \frac{|G|}{\eps^2 |A|} = \frac{|G|^{1-\alpha}}{\eps^2}.
$$
However, this does not reveal any information about the structure of the spectrum, except from a bound on its size.

\paragraph{Dimension bound.}
A combinatorial structural result on the spectrum was obtained by Chang~\cite{chang2002polynomial}. She discovered that the spectrum is low dimensional. For a set $\Gamma \subseteq \widehat{G}$, denote its \emph{dimension} as the minimal integer $d$, such that there exist $\gamma_1,\ldots,\gamma_d \in \widehat{G}$ with the following property: any element $\gamma \in \Gamma$ can be represented as $\gamma = \sum \eps_i \gamma_i$ with $\eps_i \in \{-1,0,1\}$. With this definition, Chang's theorem asserts that
$$
\dim(\spec{\eps}{A}) \le O(\eps^{-2} \log(|G|/|A|)).
$$
Chang~\cite{chang2002polynomial} used this result to obtain improved bounds for Freiman's theorem on sets with small doubling, and Green~\cite{green2002arithmetic} used it to find
arithmetic progressions in sumsets. Moreover, Green~\cite{greensumset} showed that the bound in Chang's theorem cannot in general be improved, at least when $A$ is not too small. Recently, Bloom~\cite{bloom} obtained sharper bounds for a large subset of the spectrum. He showed that there exists a subset $\Gamma \subseteq \spec{\eps}{A}$
of size $|\Gamma| \ge \eps \cdot |\spec{\eps}{A}|$ such that
$$
\dim(\Gamma) \le O(\eps^{-1} \log(|G|/|A|)).
$$
He applied these structural results to obtain improved bounds for Roth's theorem and related problems. However, we note that in our regime of interest, where $|A|=|G|^{\alpha}$ with $0<\alpha<1$, both results become trivial if $\eps$ is a small enough constant. This is because both give a bound on the dimension of the form $O(\eps^{-c} (1-\alpha)) \cdot \log |G|$ with $c \in \{1,2\}$. However, any set $\Gamma \subseteq \widehat{G}$ trivially has dimension at most $\log |G|$. As our interest is in the regime of any arbitrarily small constant $\alpha,\eps>0$, we need to turn to a different set of technqiues.

\paragraph{Statistical doubling.}
Bourgain~\cite{bourgainStat} showed that for many pairs of elements in the spectrum, their sum lands in a small set. Concretely,
$$
\Pr_{\gamma_1,\gamma_2 \in \spec{\eps}{A}}[\gamma_1 + \gamma_2 \in \spec{\eps^2/2}{A}] \ge \eps^2/2,
$$
where we note that by Parseval's identity, $|\spec{\eps^2/2}{A}| \le O(|G|^{1-\alpha}/\eps^4)$. He used these results to obtain improved bounds on exponential sums. Similar
bounds can be obtained for linear combinations of more than two elements in the spectrum, for example as done by Shkredov~\cite{shkredov}. If we assume that
$|\spec{\eps^2/2}{A}| \le K |\spec{\eps}{A}|$ and
apply the Balog-Szemer\'{e}di-Gowers theorem
\cite{balog1994statistical,gowers1998new}, this implies that there exists a large subset $\Gamma \subseteq \spec{\eps}{A}$ such that $|\Gamma+\Gamma| \le (K/\eps)^{O(1)} |\Gamma|$. However, it does not provide
any bounds on the sumset of the entire spectrum, that is on
$|\spec{\eps}{A}+\spec{\eps}{A}|$. In fact, we will later see an example showing that this sumset could be much large than the spectrum, whenever $\eps \le 1/2$.

\paragraph{Combinatorial doubling.}
The motivating question for the current work is to understand whether the statistical doubling result described above, can be applied for the entire spectrum. That is, can we obtain combinatorial structural results
on the sumset of the entire spectrum $\spec{\eps}{A}+\spec{\eps}{A}$.

As a first step, we ask for which $\alpha,\eps>0$ is is true that, for any set $A$ of size $|A|=|G|^{\alpha}$, the sumset $\spec{\eps}{A}+\spec{\eps}{A}$ is much smaller than the entire group.
There are two regimes where this is trivially true. First, when $\alpha>1/2$, it is true since by Parseval's identity, $\spec{\eps}{A}$ is smaller than the square root of the group size, and hence
$$
|\spec{\eps}{A} + \spec{\eps}{A}| \le |\spec{\eps}{A}|^2 \le \frac{|G|^{2-2\alpha}}{\eps^4} .
$$
Also, when $\eps>3/4$ then $\spec{\eps}{A}+\spec{\eps}{A} \subseteq \spec{4 \eps-3}{A}$ (see, e.g.,~\cite{tao2006additive} for a proof) and hence again by Parseval's identity, the size of the sumset is bounded by
$$
|\spec{\eps}{A} + \spec{\eps}{A}| \le |\spec{\eps}{A}|^2 \le \frac{|G|^{1-\alpha}}{(4\eps-3)^2} .
$$
As the following example shows, the thresholds of $\alpha=1/2, \eps=1/2$ are tight.

\begin{example}
Let $G = \Z_2^{2n}$ and $A = (\Z_2^n\times \{0^n\}) \cup  (\{0^n\}\times \Z_2^n)$. Then $|A|= 2 |G|^{1/2}-1$, $\spec{1/2}{A} = A$ and $A+A = G$.
\end{example}

So, it seems that such structural results are hopeless when $\alpha,\eps<1/2$. However, there is still hope: in the example, if we restrict to a large subset $A' = \Z_2^n\times\{0^n\}\subseteq A$, then
$\spec{1/2}{A^\prime} = \{0^n\} \times \Z_2^n$ is a subgroup, and specifically the size of $\spec{1/2}{A^\prime}+\spec{1/2}{A^\prime}$ is bounded away from the entire group. Our first result is that this is true in general. In fact, the size of the sumset is close to the bound given by Parseval's identity, which is approximately $|G|^{1-\alpha}$.

\begin{theorem}\label{thm:spec_dbl}
Fix $0<\delta<\alpha<1/2$ and $0<\eps<1/2$. Let $A\subseteq G$ of size $|A| \geq |G|^\alpha$. Then there exists a subset $A' \subseteq A$ of size
$|A'| \ge |A|/C$ such that
$$
\left| \spec{\eps}{A'} + \spec{\eps}{A'} \right| \le C  |G|^{1-\alpha+\delta}
$$
where $C \le \exp((1/\eps)^{O(1/\delta)})$.
\end{theorem}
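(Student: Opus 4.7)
My plan is an iterative refinement argument. I would construct a nested chain $A = A_0 \supseteq A_1 \supseteq \cdots \supseteq A_k$, with $k = O(1/\delta)$ iterations and $|A_{i+1}| \ge |A_i|/L$ for $L = \exp(\mathrm{poly}(1/\eps))$, so the total density loss $|A|/|A_k|$ is bounded by $C \le \exp((1/\eps)^{O(1/\delta)})$. The target is to reach a subset $A_k$ whose spectrum is closed under addition into a Parseval-bounded set: concretely, $\spec{\eps}{A_k} + \spec{\eps}{A_k} \subseteq \spec{\eta}{A_k}$ for some constant $\eta = \eta(\eps,\delta)$. By Parseval, the right-hand side has size at most $C|G|^{1-\alpha}/\eta^2$, which for $|G|$ sufficiently large is bounded by $C|G|^{1-\alpha+\delta}$ as required (for small $|G|$ the bound is essentially vacuous since the target already exceeds $|G|$).

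The single refinement step leverages Bourgain's statistical doubling (cited in the introduction), which guarantees that at least an $\eps^2/2$-fraction of pairs $(\gamma_1,\gamma_2) \in \spec{\eps}{A_i}^2$ already satisfy $\gamma_1+\gamma_2 \in \spec{\eps^2/2}{A_i}$. The challenge is to convert this statistical guarantee into a deterministic containment after passing to a subset, which requires handling the remaining bad pairs. My approach is to identify characters $\gamma$ appearing in many bad pairs and pass to a subset $A_{i+1}$ of $A_i$ obtained by pigeonholing on level sets of those characters; on the chosen subset these characters take nearly constant values and so their pairwise sums acquire large Fourier coefficients, pushing previously bad sums into the spectrum.

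The main technical hurdle is executing the refinement so that a single restriction simultaneously handles enough bad pairs to produce measurable improvement, while keeping the density loss to $\exp(\mathrm{poly}(1/\eps))$ per step. I would aggregate the bad pairs via a structural lemma, such as Balog-Szemer\'{e}di-Gowers applied to the bad-pair graph in $\spec{\eps}{A_i} \times \spec{\eps}{A_i}$, or a Chang-style low-dimension bound on the characters appearing with large multiplicity, so that a bounded number of pigeonhole choices suffice to align the critical characters. A secondary subtlety is that passing from $A_i$ to $A_{i+1}$ can introduce new characters into $\spec{\eps}{A_{i+1}}$ that were not in $\spec{\eps}{A_i}$; controlling these requires a robust invariant that tracks not only the current spectrum but a neighborhood of it in an appropriate sense (for instance, working throughout with $\spec{\eps/2}{A}$ as a fixed ambient set and tracking Fourier mass within it). Telescoping the per-step density loss over the $O(1/\delta)$ iterations yields the announced bound on $C$.
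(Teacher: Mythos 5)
Your high-level framing (iterate, lose $\exp(\mathrm{poly}(1/\eps))$ per step, telescope) matches the paper's, but the core mechanism you propose is genuinely different from the paper's and has a real gap. The paper does not aim for your target containment $\spec{\eps}{A_k}+\spec{\eps}{A_k}\subseteq\spec{\eta}{A_k}$ with $\eta$ an absolute constant; instead it proves a weaker, quantitative statement. It introduces the $|A|\times|\Gamma|$ matrix $M_{a,\gamma}=\gamma(a)\,\overline{\gamma(A)}/|\gamma(A)|$ and a bespoke $L_\infty$ regularity notion for it (Definition~2.1). In the regular case (Lemma~\ref{lemma:regular}), one only gets that for each $\gamma\in\spec{\eps}{A}$, \emph{most} (a $0.9$ fraction) of $\Gamma$ translates into $\spec{\eps\rho/2}{A}$ under $+\gamma$, and then Lemma~\ref{lemma:regdouble} bounds $|\spec{\eps}{A}-\spec{\eps}{A}|$ by a many-representations counting argument, not by a containment. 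In the non-regular case, Lemma~\ref{lemma:minor} is a density-increment on the matrix $M$ (via step-function approximation of the witness functions $f,g$), not a pigeonholing on level sets of bad characters. Your route via Bourgain's statistical lemma plus BSG/Chang has no obvious way to recover the full-spectrum statement: BSG only hands you a \emph{large subset} of the spectrum with bounded doubling, which is exactly the statistical statement the paper is trying to upgrade; and Chang-type dimension bounds are vacuous in the regime $\alpha,\eps<1/2$, as the introduction points out.

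Concretely, I see three gaps. First, the target invariant $\spec{\eps}{A_k}+\spec{\eps}{A_k}\subseteq\spec{\eta}{A_k}$ with $\eta$ a constant independent of $|G|$ is not established by the paper and is plausibly false in general; the $|G|^{\delta}$ loss in the theorem comes precisely from not being able to force the spectra at two different thresholds to have comparable size, and the paper handles this by a second branch of the iteration (``step (iii)'') that shrinks the threshold $\rho$ and argues it can happen only $O(1/\delta)$ times because each occurrence multiplies the spectrum size by $K=|G|^{\delta}$. Second, what you call a ``secondary subtlety'' (passing to $A_{i+1}$ changes the spectrum) is in fact the central difficulty; the paper's matrix formalism handles it because Lemma~\ref{lemma:minor} returns, together with $A'$, a subset $\Gamma'$ of the \emph{old} spectrum on which the average of $M(A',\Gamma')$ is larger, and Claim~\ref{clm:Gamma_size_step_2} shows this forces $\spec{\rho_{i+1}}{A_{i+1}}$ to remain large. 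You have no analogous control. Third, your iteration count $k=O(1/\delta)$ with per-step loss $\exp(\mathrm{poly}(1/\eps))$ is not what happens: the threshold-shrinking steps occur $O(1/\delta)$ times, but between any two of them the density-increment step can repeat $(1/\eps)^{O(j)}$ times (Claim~\ref{claim:k1}), and it is the compounding of these that produces $C\le\exp((1/\eps)^{O(1/\delta)})$. Your accounting would give the sharper $\exp(\mathrm{poly}(1/\eps)/\delta)$, which is not supported by any argument you give.
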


A more refined notion of structure is that of bounded doubling. Here, we say that a set $\Gamma$ has a doubling constant $K$ if $|\Gamma+\Gamma| \le K |\Gamma|$.
Note that if $|\spec{\eps}{A'}|$ has size close to the bound given by Parseval's identity, which is roughly $|G|^{1-\alpha}$, then Theorem~\ref{thm:spec_dbl} would show that $\spec{\eps}{A'}$ has a small doubling constant $K=C |G|^{\delta}$. We conjecture that this is always the case. However, we could only show it if we are allowed to change the value of $\eps$ somewhat. We state both the theorem and the conjecture below.

\begin{theorem}\label{thm:spec_dbl2}
Fix $0<\delta<\alpha<1/2$ and $0<\eps<1/2$. Let $A\subseteq G$ of size $|A| \geq |G|^\alpha$.
Then there exists a subset $A' \subseteq A$ of size $|A'| \ge |A|/C$ and $\eps' \ge \eps^{2^{1/\delta}}$ such that
$$
|\spec{\eps'}{A'}| \ge |\spec{\eps}{A}| / C
$$
and
$$
|\spec{\eps'}{A'}+\spec{\eps'}{A'}| \le C |G|^{\delta} \cdot |\spec{\eps'}{A'}|,
$$
where $C \le \exp \left((1/\eps)^{O(2^{4/\delta})} \right)$.
\end{theorem}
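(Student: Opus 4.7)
The plan is to reduce Theorem~\ref{thm:spec_dbl2} to Theorem~\ref{thm:spec_dbl} applied at a carefully chosen intermediate threshold $\eps'\in[\eps^{2^{1/\delta}},\eps]$. The key observation is that Theorem~\ref{thm:spec_dbl} produces a sumset bound $|\spec{\eps'}{A'}+\spec{\eps'}{A'}|\le C|G|^{1-\alpha+\delta}$, which translates into the desired doubling bound $C|G|^{\delta}$ provided the spectrum $\spec{\eps'}{A'}$ itself has size close to its Parseval bound $|G|^{1-\alpha}$. So the heart of the matter is to find $\eps'$ at which the spectrum is nearly saturated, and to show this saturation is inherited by the subset $A'$.

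\textbf{Pigeonhole over squared thresholds.} To locate $\eps'$, iterate Bourgain's squaring: set $\eps_0:=\eps$ and $\eps_{i+1}:=\eps_i^2/2$ for $i=0,\ldots,k$ with $k:=\lceil 1/\delta\rceil$; after absorbing $2^{O(2^{1/\delta})}$ factors into the constants, $\eps_k$ plays the role of $\eps^{2^{1/\delta}}$ required by the theorem. The sizes $s_i:=|\spec{\eps_i}{A}|$ are nondecreasing, and $s_k\le|G|^{1-\alpha}/\eps_k^2$ by Parseval. Telescoping the product of the $k$ consecutive ratios $s_{i+1}/s_i$ and pigeonholing yields an index $i^*$ with $s_{i^*+1}/s_{i^*}\le(|G|^{1-\alpha}/\eps_k^2)^{1/k}\le|G|^{\delta}(1/\eps)^{O(2^{1/\delta}\delta)}$. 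Set $\eps':=\eps_{i^*}$. Bourgain's statistical doubling at level $\eps'$, combined with Balog--Szemer\'edi--Gowers, then extracts a subset $\Gamma\subseteq\spec{\eps'}{A}$ of size proportional to $s_{i^*}$ and with $|\Gamma+\Gamma|\le\eps_k^{-O(1)}s_{i^*+1}$, so $\Gamma$ has doubling at most $|G|^{\delta}(1/\eps)^{O(2^{1/\delta})}$.

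\textbf{Combining with Theorem~\ref{thm:spec_dbl} and the main obstacle.} Now apply Theorem~\ref{thm:spec_dbl} to $A$ at threshold $\eps'$: there is $A'\subseteq A$ with $|A'|\ge|A|/C_1$ and $|\spec{\eps'}{A'}+\spec{\eps'}{A'}|\le C_1|G|^{1-\alpha+\delta}$, where $C_1\le\exp((1/\eps)^{O(2^{1/\delta}/\delta)})$. The remaining (and hardest) step is to lower-bound $|\spec{\eps'}{A'}|$ by $|G|^{1-\alpha}/C$, which will also yield $|\spec{\eps'}{A'}|\ge|\spec{\eps}{A}|/C$ via $\spec{\eps}{A}\subseteq\spec{\eps'}{A}$ and Parseval. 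For this I would exploit the structure of $\Gamma$ together with $|A'|\ge|A|/C_1$: the Fourier coefficients of $A'$ at $\gamma\in\Gamma$ differ from those of $A$ by at most $|A\setminus A'|$, and a Parseval-averaging over $\Gamma$ shows that a positive fraction of $\Gamma$ persists inside $\spec{\eps''}{A'}$ for a slightly smaller threshold $\eps''\ge\eps^{2^{1/\delta}}$, because the mass $\sum_{\gamma\in\Gamma}|\widehat{1_{A\setminus A'}}(\gamma)|^2\le|A\setminus A'|\cdot|G|$ cannot wipe out the large coefficients on more than a $1/C$ fraction of $\Gamma$. Aligning $\eps''$ with the final threshold and composing the losses from Theorem~\ref{thm:spec_dbl}, the pigeonhole, and the persistence step yields the final tower $C\le\exp((1/\eps)^{O(2^{4/\delta})})$ in the theorem.
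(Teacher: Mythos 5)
The proposal takes a genuinely different route from the paper, but it has a serious gap in the last step, and I do not see how to repair it without essentially redoing the paper's argument.

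The paper does not reduce Theorem~\ref{thm:spec_dbl2} to Theorem~\ref{thm:spec_dbl}. Instead it reruns the same iterative refinement, changing $\lambda_i$ from $\eps\rho_i/150$ to $\rho_i^2/150$ and replacing the drop $\rho_{i+1}=\eps\rho_i/2$ in step (iii) by the squaring $\rho_{i+1}=\rho_i^2/2$. The crucial point, which the paper gets from Claim~\ref{clm:Gamma_size_step_2} and which the theorem statement of Theorem~\ref{thm:spec_dbl} does \emph{not} export, is that the refinement step (ii) preserves a polynomial-in-$\lambda_i$ fraction of the spectrum: $|\spec{\rho_{i+1}}{A_{i+1}}| \ge c\lambda_i^{30}|\spec{\rho_i}{A_i}|$. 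This is exactly the lower bound on $|\spec{\eps'}{A'}|$ that one needs, and it is intrinsic to the proof, not to the black-box statement. Your reduction only invokes the statement of Theorem~\ref{thm:spec_dbl}, which guarantees $|A'|\ge |A|/C_1$ and $|\spec{\eps'}{A'}+\spec{\eps'}{A'}|\le C_1|G|^{1-\alpha+\delta}$ but gives no information whatsoever about $|\spec{\eps'}{A'}|$; the sumset bound could be satisfied trivially if $\spec{\eps'}{A'}$ were just the zero character.

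Your ``persistence'' step, which is supposed to fill this gap, does not work. You bound $\sum_{\gamma\in\Gamma}|\widehat{1_{A\setminus A'}}(\gamma)|^2 \le |A\setminus A'|\cdot |G|$ and argue that only a small fraction of $\Gamma$ can have its coefficient wiped out. But Theorem~\ref{thm:spec_dbl} only guarantees $|A'|\ge |A|/C_1$, so $|A\setminus A'|$ can be as large as $(1-1/C_1)|A|\approx |A|$. The number of characters $\gamma$ with $|\widehat{1_{A\setminus A'}}(\gamma)|\ge \eps'|A|/2$ is then bounded only by roughly $4|G|/(\eps'^2|A|)$, which by Parseval is of the \emph{same} order as $|\spec{\eps'}{A}|\ge |\Gamma|$. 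So the ``bad'' set can in the worst case engulf all of $\Gamma$, and nothing persists. Your argument would be sound if $|A\setminus A'|\le |A|/C$, i.e., if $A'$ captured almost all of $A$, but Theorem~\ref{thm:spec_dbl} delivers the opposite: $A'$ may be a tiny (tower-type small) fraction of $A$. There is also a secondary mismatch: $\Gamma$ (the BSG output) lives inside $\spec{\eps'}{A}$, while the doubling bound you want is for $\spec{\eps'}{A'}$; even if $\Gamma$ survived, its doubling says nothing about the full spectrum of $A'$. For both reasons the reduction to Theorem~\ref{thm:spec_dbl} as a black box cannot close; you need to open up the iterative proof and carry the spectrum-size lower bound through each refinement step, which is what the paper does.
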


\begin{conjecture}\label{conj:spec_dbl}
Fix $0<\delta<\alpha<1/2$ and $0<\eps<1/2$. Let $A\subseteq G$ of size $|A| \geq |G|^\alpha$.
Then there exists a subset $A' \subseteq A$ of size $|A'| \ge |A|/C$ such that
$$
|\spec{\eps}{A'}+\spec{\eps}{A'}| \le C |G|^{\delta} \cdot |\spec{\eps}{A'}|,
$$
where $C=C(\eps,\delta)$.
\end{conjecture}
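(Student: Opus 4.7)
The natural starting point for Conjecture~\ref{conj:spec_dbl} is to combine the sumset bound of Theorem~\ref{thm:spec_dbl} with a lower bound on the size of $\spec{\eps}{A'}$. Apply Theorem~\ref{thm:spec_dbl} with parameter $\delta/2$ in place of $\delta$ to obtain a subset $A'\subseteq A$ with $|A'|\ge|A|/C$ and
\[
|\spec{\eps}{A'}+\spec{\eps}{A'}|\le C|G|^{1-\alpha+\delta/2}.
\]
Write $\Gamma:=\spec{\eps}{A'}$. There are two easy sub-cases. If $|\Gamma|\ge|G|^{1-\alpha-\delta/2}$ then the desired doubling ratio $C|G|^\delta$ follows directly from the displayed inequality. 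If $|\Gamma|\le|G|^\delta$ then trivially $|\Gamma+\Gamma|\le|\Gamma|^2\le|G|^\delta|\Gamma|$. The difficult regime is the intermediate one, $|G|^\delta<|\Gamma|<|G|^{1-\alpha-\delta/2}$, where neither bound is good enough on its own.

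In this intermediate regime, the hypothesis that $|\Gamma|$ falls significantly short of the Parseval bound $|G|^{1-\alpha}/\eps^2$ is itself a structural statement about $A'$. I would attempt a density-increment strategy. Apply Bourgain's statistical doubling bound (cited in the introduction), followed by the Balog--Szemer\'edi--Gowers theorem, to extract a subset $\Gamma_1\subseteq\Gamma$ with $|\Gamma_1|\ge\eps^{O(1)}|\Gamma|$ and $|\Gamma_1+\Gamma_1|\le\eps^{-O(1)}|\Gamma_1|$. By Freiman--Ruzsa, $\Gamma_1$ lies inside a coset progression $P\subseteq\widehat{G}$ of bounded rank and size at most $\eps^{-O(1)}|\Gamma_1|$. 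The plan is then to refine $A'$ by intersecting it with a coset of the annihilator of a suitable sub-progression of $P$, chosen so that a constant fraction of $A'$ survives; on the refined subset $A''$, the hope is that every character in $\spec{\eps}{A''}$ now lies inside a mild enlargement of $P$, immediately yielding the desired doubling bound for $\spec{\eps}{A''}+\spec{\eps}{A''}$.

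The main obstacle, and what distinguishes the conjecture from the proven Theorem~\ref{thm:spec_dbl2}, is that refining $A'$ to $A''$ can introduce brand new large Fourier coefficients that had no analogue in the original spectrum $\Gamma$, so one loses control of the full spectrum at threshold $\eps$. Every combinatorial handle I am aware of (Chang's theorem, BSG, iterated Bourgain) introduces a polynomial loss in $\eps$ per step; iterating the density increment roughly $\log(1/\delta)$ times to close the sumset gap appears to produce exactly the tower-type loss $\eps\mapsto\eps^{2^{1/\delta}}$ that already appears in Theorem~\ref{thm:spec_dbl2}. Avoiding this loss---and thereby proving the conjecture with $\eps$ unchanged---would likely require either a sharper quantitative form of BSG tailored to spectra (so that a single application suffices), or a direct structural theorem controlling $\spec{\eps}{A''}$ using only the bounded-doubling subset $\Gamma_1$ of the spectrum rather than the full set $A''$. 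Locating such a tool is precisely why the statement is a conjecture rather than a theorem.
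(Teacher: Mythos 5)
The statement you were asked to prove is labeled a \emph{conjecture} in the paper, and the paper gives no proof of it; the authors only establish the weaker Theorem~\ref{thm:spec_dbl2}, in which the threshold is allowed to degrade from $\eps$ to some $\eps'\ge\eps^{2^{1/\delta}}$. You correctly recognize this, and your closing paragraph accurately articulates the obstruction: after refining $A'$ to $A''$, new large Fourier coefficients can appear at threshold $\eps$ with no a priori relation to the original spectrum, so one cannot control $\spec{\eps}{A''}$ itself, only a surrogate at a weaker threshold. Your two easy sub-cases (deduced from Theorem~\ref{thm:spec_dbl} with $\delta/2$) are correct, and the intermediate regime $|G|^{\delta}<|\Gamma|<|G|^{1-\alpha-\delta/2}$ is indeed where the difficulty lies.

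Two remarks on the sketch. First, your proposed route (Bourgain's statistical doubling $\to$ Balog--Szemer\'edi--Gowers $\to$ Freiman--Ruzsa $\to$ restriction to a coset of an annihilator) is genuinely different from the machinery the paper uses for Theorem~\ref{thm:spec_dbl2}, which is a matrix-regularity dichotomy (Lemmas~\ref{lemma:regular}, \ref{lemma:regdouble}, \ref{lemma:minor}) driven by an iterated density increment on $M(A,\Gamma)$, and never invokes BSG or Freiman--Ruzsa. It is telling, and consistent with the authors' experience, that your route appears to hit the same iterated-$\eps$ loss. Second, a concrete technical caveat in your plan: after intersecting $A'$ with a coset of the annihilator of a sub-progression of $P$, you need simultaneously that (a) a constant fraction of $A'$ survives and (b) $\spec{\eps}{A''}$ is contained in a bounded enlargement of $P$; these two demands pull in opposite directions, and reconciling them seems to require iterating, which is exactly what reintroduces the loss. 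Since you explicitly do not claim to close the gap, this is an observation rather than an error; your conclusion that the statement remains open agrees with the paper.
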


\paragraph{Notations.}
We use big-O notation. For two quantities $x,y$, the expression $x = O(y)$ means $x \le cy$ for an unspecified absolute constant $c>0$. We also use $c,c',c_1$, etc to denote
unspecified absolute constants, where the big-O notation may be confusing. The value of these may change between different instantiations of them. We make no effort to
optimize constants.

\paragraph{Paper organization.}
We prove Theorem~\ref{thm:spec_dbl} in Section~\ref{sec:proof1} and Theorem~\ref{thm:spec_dbl2} in Section~\ref{sec:proof2}.

\section{Proof of Theorem~\ref{thm:spec_dbl}}
\label{sec:proof1}

We begin by introducing some notation. For $A \subseteq G$ and $\Gamma\subseteq \widehat{G}$, define an $|A| \times |\Gamma|$ complex matrix $ M = M(A, \Gamma)$, with rows
indexed by $A$ and columns by $\Gamma$, as follows.
First, denote by $\gamma(A) := \E_{a\in A} [\gamma(a)]$ the average value of the character $\gamma$ on $A$. Define
$$
M_{a,\gamma} := \gamma(a) \frac{\overline{\gamma(A)}}{|\gamma(A)|}.
$$
With this definition, we have that for any $\Gamma \subseteq \spec{\eps}{A}$,
\begin{equation}\label{equation:m}
\left| \mathbf{1}_A^T  M(A,\Gamma) \mathbf{1}_{\Gamma} \right|
= \sum_{\gamma \in \Gamma} \left| \sum_{a \in A} \gamma(a) \right|
\geq \eps |A| |\Gamma| .
\end{equation}
We next define a notion of regularity for $M(A,\Gamma)$.

\begin{definition}[Regularity for $M(A,\Gamma)$]
Let $A\subseteq G, \Gamma \subseteq \widehat{G}$. The matrix $M = M(A,\Gamma)$  is called $\lambda$-\textit{regular} if for every pair of functions $f:A \rightarrow \mathbb{C}$, $g:\Gamma \rightarrow \mathbb{C}$ such that $\sprod{f}{\mathbf{1}_A} = 0 $ or $\sprod{g}{\mathbf{1}_{\Gamma}} = 0 $ or both,  it holds that $$| f^T  M g| < \lambda \| f\|_{\infty} \| g\|_{\infty} |A||\Gamma| .$$
\end{definition}

It is conventional to use the $L_2$-norm in definition of regularity, however in our case, the use of $L_{\infty}$-norm makes the argument more straightforward and gives better bounds.

The argument informally goes as follows. We divide into two cases. First, we show if $M = M(A,\spec{\eps}{A})$ is $\lambda$-regular for a suitable choice of $\lambda$,
then $\spec{\eps}{A}$ has bounded doubling. Otherwise, if $M$ is not $\lambda$-regular, we find large subsets $A' \subseteq A, \Gamma' \subseteq \spec{\eps}{A}$ such that $M(A', \Gamma')$ has
higher average. This allows us to revert to study $M(A', \spec{\eps'}{A'})$ where $\eps'=\eps+\lambda^{O(1)}$ and iterate.

First, we analyze the case where $M$ is regular.

\begin{lemma}\label{lemma:regular}
Fix some $0<\eps, \rho<1$ and $\Gamma \subseteq \spec{\rho}{A}  $. If $M = M(A,\Gamma )$ is $\eps\rho/150$-regular, then for any $\gamma \in \spec{\eps}{A}$, there is a subset $\Gamma_\gamma \subseteq \Gamma$, $| \Gamma_\gamma | \ge 0.9 | \Gamma |$ such that
$$
\gamma + \Gamma_\gamma \in \spec{\eps \rho/2}{A}.
$$
\end{lemma}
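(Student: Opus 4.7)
The plan is to express $|\widehat{1_A}(\gamma+\gamma')|$ directly in terms of the matrix $M = M(A,\Gamma)$ and then use $\lambda$-regularity to show that only a small fraction of columns $\gamma'\in\Gamma$ fail the required lower bound. Since characters are homomorphisms, $\widehat{1_A}(\gamma+\gamma') = \sum_{a\in A}\gamma(a)\gamma'(a)$, and the definition of $M$ lets one invert $\gamma'(a) = M_{a,\gamma'}\cdot \gamma'(A)/|\gamma'(A)|$. Setting $f(a) := \gamma(a)$, this rewriting gives $|\widehat{1_A}(\gamma+\gamma')| = |(M^{T}f)(\gamma')|$, so the task reduces to lower-bounding $|(M^{T}f)(\gamma')|$ on at least $0.9|\Gamma|$ of the columns.

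To isolate the main contribution, I would decompose $f = \gamma(A)\,\mathbf{1}_A + f_0$, where $\sprod{f_0}{\mathbf{1}_A}=0$ and $\|f_0\|_\infty \le 2$. A direct computation using the definition of $M$ gives $(M^{T}\mathbf{1}_A)(\gamma') = |A|\cdot|\gamma'(A)|$, so the main term of $(M^{T}f)(\gamma')$ has magnitude at least $|\gamma(A)|\cdot|A|\cdot|\gamma'(A)| \ge \eps\rho|A|$, using $\gamma\in\spec{\eps}{A}$ together with $\gamma'\in\Gamma\subseteq\spec{\rho}{A}$. It then suffices to show that the error term $(M^{T}f_0)(\gamma')$ exceeds $\eps\rho|A|/2$ in absolute value on at most $0.1|\Gamma|$ columns.

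To bound this bad set, I would let $B := \{\gamma'\in\Gamma : |(M^{T}f_0)(\gamma')| > \eps\rho|A|/2\}$ and test against the unit-modulus function
$$
g(\gamma') \;:=\; \mathbf{1}_B(\gamma')\cdot\frac{\overline{(M^{T}f_0)(\gamma')}}{|(M^{T}f_0)(\gamma')|},
$$
which gives $f_0^{T} M g = \sum_{\gamma'\in B} |(M^{T}f_0)(\gamma')| \ge |B|\cdot\eps\rho|A|/2$. Since $\sprod{f_0}{\mathbf{1}_A}=0$, $\lambda$-regularity with $\lambda = \eps\rho/150$ yields $|f_0^{T} M g| < \lambda\cdot 2\cdot 1\cdot|A||\Gamma|$, and rearranging produces $|B| < (4/150)|\Gamma| < 0.1|\Gamma|$. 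Setting $\Gamma_\gamma := \Gamma\setminus B$ then finishes the argument.

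I do not anticipate a serious obstacle. The key conceptual move is that the splitting $f = \gamma(A)\mathbf{1}_A + f_0$ converts the desired pointwise lower bound on a bilinear expression into exactly the kind of mean-zero inner product that $\lambda$-regularity is designed to control, and that choosing $g$ to be the phase of the error indicator on $B$ is the natural dual test vector. The only bookkeeping issue is keeping track of the factor $\|f_0\|_\infty \le 2$ combined with the ``half of the main term'' tolerance, which is what fixes the constant $\lambda = \eps\rho/150$ in the hypothesis (any $\lambda \le \eps\rho/40$ would in fact suffice).
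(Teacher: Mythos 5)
Your proof is correct, and the key conceptual move — writing $f(a)=\gamma(a)$, decomposing $f=\gamma(A)\mathbf{1}_A+f_0$ with $f_0$ orthogonal to $\mathbf{1}_A$, and then feeding $f_0$ into the regularity hypothesis — is exactly the same decomposition the paper uses. The difference is one of presentation and of test vector on the $\Gamma$-side. The paper argues by contradiction: it supposes a bad $\gamma_\circ$ exists, takes $g$ to be the normalized indicator $\frac{|\Gamma|}{|\Gamma'|}\mathbf{1}_{\Gamma'}$ of the bad set, decomposes \emph{both} $f$ and $g$ into mean plus mean-zero parts, and tracks four bilinear terms. You instead argue directly: you explicitly compute $(M^T\mathbf{1}_A)(\gamma')=|A|\,|\gamma'(A)|$ so the main term is transparently $\ge\eps\rho|A|$ pointwise, and you test $f_0$ against the \emph{phase-weighted} indicator of the bad set $B$, i.e.\ $g=\mathbf{1}_B\cdot\overline{(M^Tf_0)}/|M^Tf_0|$. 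Since $\|g\|_\infty\le 1$ with no normalization factor, and since only $f$ needs decomposing, you handle a single error term rather than three, which is why you get away with the weaker hypothesis $\lambda\le\eps\rho/40$ instead of $\eps\rho/150$. This is a modest but real simplification; the paper's extra factor comes from also decomposing $g$ and from bounding $\|g\|_\infty,\|g_2\|_\infty$ by 10 rather than keeping $g$ of unit sup-norm.

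One small point of rigor worth spelling out if you were to write this up: the identity $f_0^T M g=\sum_{\gamma'\in B}|(M^Tf_0)(\gamma')|$ relies on the bilinear form in the regularity definition being genuinely bilinear (no conjugation), which the paper's definition indeed uses; otherwise one would need $g=\mathbf{1}_B\cdot(M^Tf_0)/|M^Tf_0|$ instead. You have it right as stated.
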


\begin{proof}
Suppose towards contradiction that there is some $\gamma_\circ \in \spec{\eps}{A}$ for which the claim does not hold. That is, there exists a subset $\Gamma' \subseteq \Gamma$ of size
 $| \Gamma' | > 0.1 | \Gamma|$ such that $\forall \gamma' \in \Gamma'$,
$$
\gamma_\circ + \gamma' \notin \spec{\eps \rho/2}{A}.
$$
Define a pair of functions $f: A \rightarrow \mathbb{C}$ and $g: \Gamma \rightarrow \mathbb{C}$ by
\begin{eqnarray*}
f(a) &=& \gamma_\circ(a),\\
g(\gamma)  &=& \frac{|\Gamma|}{|\Gamma'|} \mathbf{1}_{\Gamma'}(\gamma).
\end{eqnarray*}
We have
\begin{align*}
f^T  M g = & \sum_{\gamma\in \Gamma} \left[ \sum_{a\in A} \gamma_\circ (a) \gamma(a) \frac{\overline{\gamma(A)}}{|\gamma(A)|}\right]  \left[ \frac{|\Gamma|}{|\Gamma'|}\mathbf{1}_{\Gamma'}(\gamma)\right] \\
& = \frac{|\Gamma|}{|\Gamma'|} \sum_{\gamma\in \Gamma} \frac{\overline{\gamma(A)}}{|\gamma(A)|} \sum_{a\in A} \gamma_\circ(a) \gamma(a) \mathbf{1}_{\Gamma'}(\gamma)\\
& = \frac{|\Gamma|}{|\Gamma'|} \sum_{\gamma' \in \Gamma'} \frac{\overline{\gamma'(A)}}{|\gamma'(A)|} \sum_{a \in A} \left( \gamma_\circ + \gamma'\right) (a) .
\end{align*}
By our assumption, $\forall \gamma' \in \Gamma', \gamma_\circ + \gamma' \notin \spec{\eps \rho/2}{A}$. Therefore
$$\left| f^T  Mg\right| \leq (\eps \rho/2) \cdot | \Gamma| | A | .$$
Decompose $f$ as $f = f_1 + f_2$ with $ f_1 = \E_{a\in A} [f(a)]\cdot \mathbf{1}_A $ and $g$ as $g = g_1 + g_2 $ with $ g_1 =  \E_{\gamma\in \Gamma} [g(\gamma)] \cdot \mathbf{1}_{\Gamma}
= \mathbf{1}_{\Gamma}$. Then

\begin{equation}\label{equation:eq}
f^T  M g = f_1^T  M g_1 + f_2^T  M g_1 + f_1^T  M g_2 +f_2^T  M g_2 .
\end{equation}
We have that $\langle f_2 , \mathbf{1}_A\rangle = 0 $, $\langle g_2,\mathbf{1}_\Gamma\rangle = 0 $ and
$$
\left| f_1^T  M g_1\right| = \left| \E_{a\in A}f(a) \cdot \left( \mathbf{1}_A^T  M \mathbf{1}_\Gamma \right)\right| \geq \left| \E_{a\in A} [\gamma_\circ(a)]\right| \cdot \rho | \Gamma| | A |  \ge \eps \rho | \Gamma| | A | .
$$
We show that the other terms in Equation \eqref{equation:eq} are too small to cancel out the contribution of $f_1^T  M g_1$. Consequently, we reach a contradiction.

In each one of the terms $f_1^T  M g_2,f_2^T  M g_1,f_2^T  M g_2$ at least one of the functions are orthogonal to the identity function. Therefore, we can bound the size of these terms using the $\frac{\eps\rho}{150}$-regularity assumption. We have $\|f_1\|_\infty \leq 1, \|f_2\|_\infty\leq 2, \|g_1\|_\infty \leq 10, \|g_2\|_\infty\leq 10$, and hence
$$
\left| f_2^T M g_1 + f_1^T M g_2 + f_2^T M g_2\right| \le (20+10+20) \cdot (\eps \rho/150)|A| |\Gamma| =
(\eps\rho/3) |A||\Gamma|.
$$
This implies that $\left| f^T  M g \right| \ge \frac{2}{3} \eps \rho |A| |\Gamma|$, which is a contradiction.
\end{proof}

Next, we show how to use Lemma~\ref{lemma:regular} to infer that if $M = M(A, \spec{\rho}{A})$ is  $\frac{\eps\rho}{150}$-regular then $\left| \spec{\eps}{A} - \spec{\eps}{A}\right|$ is small as long as $|\spec{\eps \rho/2}{A}| \approx |\spec{\rho}{A}|$.

\begin{lemma}\label{lemma:regdouble}
If $M = M(A, \spec{\rho}{A}) $ is $\frac{\eps\rho}{150}$-regular, then
$$\left| \spec{\eps}{A} - \spec{\eps}{A}\right| \leq 2 \frac{\left| \spec{\eps \rho/2}{A}\right|^2}{ \left| \spec{\rho}{A}\right|} .$$
\end{lemma}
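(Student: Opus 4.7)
The plan is to use Lemma~\ref{lemma:regular} to produce many representations of each difference in $\spec{\eps}{A}-\spec{\eps}{A}$ as a difference of two elements of $\spec{\eps\rho/2}{A}$, and then double count pairs in $\spec{\eps\rho/2}{A}\times\spec{\eps\rho/2}{A}$.

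First, I would apply Lemma~\ref{lemma:regular} with $\Gamma = \spec{\rho}{A}$. For each $\gamma \in \spec{\eps}{A}$ this produces a subset $\Gamma_\gamma \subseteq \spec{\rho}{A}$ with $|\Gamma_\gamma| \ge 0.9 |\spec{\rho}{A}|$ such that $\gamma + \Gamma_\gamma \subseteq \spec{\eps\rho/2}{A}$. By inclusion–exclusion, any two such subsets intersect in at least $0.8 |\spec{\rho}{A}|$ elements.

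Next, for each difference $d \in \spec{\eps}{A}-\spec{\eps}{A}$, fix (arbitrarily) a representation $d = \gamma_1 - \gamma_2$ with $\gamma_1,\gamma_2 \in \spec{\eps}{A}$. For every $\gamma' \in \Gamma_{\gamma_1} \cap \Gamma_{\gamma_2}$, the pair
\[
(t_1,t_2) := (\gamma_1 + \gamma',\, \gamma_2 + \gamma')
\]
lies in $\spec{\eps\rho/2}{A} \times \spec{\eps\rho/2}{A}$ and satisfies $t_1 - t_2 = d$. Distinct choices of $\gamma'$ give distinct pairs (since $t_1 - \gamma' = \gamma_1$ is fixed), so for each $d$ we obtain at least $0.8|\spec{\rho}{A}|$ pairs in $\spec{\eps\rho/2}{A}^2$ whose difference equals $d$.

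Finally, since pairs belonging to distinct values of $d$ are automatically disjoint, a double count yields
\[
0.8\,|\spec{\rho}{A}| \cdot \left|\spec{\eps}{A}-\spec{\eps}{A}\right| \;\le\; |\spec{\eps\rho/2}{A}|^2,
\]
and rearranging gives the claimed bound (with constant $1/0.8 = 1.25 \le 2$). There is no real obstacle here once Lemma~\ref{lemma:regular} is in hand; the only thing to be careful about is that the $0.9$ in the lemma is comfortably more than $1/2$, so any two of the sets $\Gamma_\gamma$ overlap in a fixed positive fraction of $\spec{\rho}{A}$, which is what makes the counting work.
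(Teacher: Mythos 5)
Your proof is correct and follows essentially the same approach as the paper: invoke Lemma~\ref{lemma:regular} to get, for each element of $\spec{\eps}{A}$, a large subset $\Gamma_\gamma\subseteq\spec{\rho}{A}$ it can be translated into $\spec{\eps\rho/2}{A}$ by; then use the $\ge 0.8|\spec{\rho}{A}|$-sized intersection to produce many distinct representations of each difference as a difference of elements of $\spec{\eps\rho/2}{A}$, and double count.
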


\begin{proof}
Fix arbitrary $\gamma_1, \gamma_2 \in \spec{\eps}{A}$. By Lemma~\ref{lemma:regular} there exist
sets $\Gamma_1, \Gamma_2 \subseteq \spec{\rho}{A}$ of size $|\Gamma_1|, |\Gamma_2| \ge 0.9 |\spec{\rho}{A}|$ such that $\gamma_1 + \Gamma_1, \gamma_2 + \Gamma_2 \subseteq \spec{\eps \rho/2}{A}$.
For any $\gamma \in \Gamma_1 \cap \Gamma_2$ we can then write
$$
\gamma_1 - \gamma_2 = (\gamma_1 + \gamma) - (\gamma_2 + \gamma)
$$
where $\gamma_1 + \gamma , \gamma_2 + \gamma \in \spec{\eps \rho/2}{A}$. This gives $|\Gamma_1 \cap \Gamma_2| \ge 0.8 |\spec{\rho}{A}|$ distinct ways to write
$\gamma_1 - \gamma_2$ as the difference of a pair of elements in $\spec{\eps \rho/2}{A}$.
Consequently
\begin{align*}
\left| \spec{\eps}{A} - \spec{\eps}{A}\right|  \leq \frac{\left| \spec{\eps \rho/2}{A}\right|^2}{\left| \Gamma_1 \cap \Gamma_2\right|}  \leq \frac{\left| \spec{\eps \rho/2}{A}\right|^2}{0.8 \left| \spec{\rho}{A}\right|} .
\end{align*}
\end{proof}

Next, we consider the case that the matrix $M$ is not $\lambda$-regular for $\lambda = \eps \rho/150$. In the following we denote $\E[M] := \E_{a,\gamma} [M_{a,\gamma}]$.
\begin{lemma}\label{lemma:minor}
If $M = M (A, \Gamma)$ is not $\lambda$-regular, then there exist subsets $A' \subseteq A$, $\Gamma' \subseteq \Gamma$ such that
$$
\left| \E \left[ M(A', \Gamma')\right]\right| \geq \left| \E \left[  M(A, \Gamma)\right] \right| + c\lambda^{15},
$$
where $|A'| \geq c \lambda^{15} |A|, |\Gamma'| \geq c \lambda^{15} |\Gamma|$, and $c>0$ is an absolute constant.
\end{lemma}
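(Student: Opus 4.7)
The plan is to convert the bilinear non-regularity of $M$ into a density increment on a sub-rectangle, using a layer-cake decomposition of $f$ and $g$ together with a conservation argument to handle sign issues.

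First, a structural observation: from $\E_{a \in A}[M_{a,\gamma}] = \gamma(A)\overline{\gamma(A)}/|\gamma(A)| = |\gamma(A)|$ it follows that $\mu := \E[M(A,\Gamma)] = \E_\gamma[|\gamma(A)|]$ is a non-negative real number, and analogously so is $\E[M(A',\Gamma')] = \E_{\gamma \in \Gamma'}|\gamma(A')|$ for any subsets. Moreover, by the triangle inequality,
\[
\E[M(A',\Gamma')] = \E_{\gamma \in \Gamma'}|\gamma(A')| \;\geq\; \left|\E_{\gamma \in \Gamma'}\gamma(A')\,\overline{\gamma(A)}/|\gamma(A)|\right| = \left|\E_{a \in A',\,\gamma \in \Gamma'} M(A,\Gamma)_{a,\gamma}\right|,
\]
so it suffices to exhibit $A', \Gamma'$ for which the average of the \emph{original} matrix $M=M(A,\Gamma)$ restricted to $A' \times \Gamma'$ has modulus at least $\mu + c\lambda^{15}$. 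Set $M_0 := M - \mu J$ with $J$ the all-ones matrix; then $\|M_0\|_\infty \leq 2$, and $f^T M_0 g = f^T M g$ by the mean-zero condition, so $|f^T M_0 g| \geq \lambda|A||\Gamma|$.

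Without loss of generality $\langle f, \mathbf{1}_A\rangle = 0$ (the other case is symmetric). Rotate $f$ by a unit phase so that $f^T M_0 g$ becomes a positive real; this preserves $\|f\|_\infty \leq 1$ and the mean-zero condition. Decompose $f = (a^+ - a^-) + i(b^+ - b^-)$ and analogously $g = (c^+ - c^-) + i(d^+ - d^-)$ with all eight parts non-negative and bounded by $1$, and expand $\Re(f^T M_0 g) \geq \lambda|A||\Gamma|$ into a sum of $16$ signed terms of the form $\epsilon R(x^T M_0 y)$ with $\epsilon \in \{\pm 1\}$ and $R \in \{\Re, \Im\}$; pigeonhole selects one term of value at least $(\lambda/16)|A||\Gamma|$. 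The layer-cake identity $x = \int_0^1 \mathbf{1}_{\{x \geq s\}}\,ds$ rewrites this term as $\int_0^1 \int_0^1 \epsilon R(\mathbf{1}_{A_s}^T M_0 \mathbf{1}_{\Gamma_t})\,ds\,dt$. Excising the pairs $(s,t)$ with $|A_s| < \delta|A|$ or $|\Gamma_t| < \delta|\Gamma|$ (whose total contribution is $O(\delta|A||\Gamma|)$, since $\int |A_s|\,ds \leq |A|$ and $\|M_0\|_\infty \leq 2$) and choosing $\delta = c\lambda$ for a small absolute constant, the remaining ``good" region keeps an integral of magnitude at least $c\lambda|A||\Gamma|$. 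A supremum argument over this region produces an explicit rectangle $A' \times \Gamma'$ with $|A'| \geq c\lambda|A|$, $|\Gamma'| \geq c\lambda|\Gamma|$, on which $|\E[M_0|_{A' \times \Gamma'}]| \geq c\lambda$ in either the real or the imaginary direction.

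The hard part is converting this into the desired modulus increment: the non-regularity only supplies a deviation in an unspecified direction, while we need one aligned with the non-negative real $\mu$. Write $w := \E[M|_{A'\times\Gamma'}] - \mu$, so $|w| \geq c\lambda$ and $|\mu + w|^2 = \mu^2 + 2\mu\Re(w) + |w|^2$. If $\Re(w) \geq 0$, then $|\E[M|_{A' \times \Gamma'}]|^2 \geq \mu^2 + c^2\lambda^2$ and $|\E[M|_{A' \times \Gamma'}]| \geq \mu + \Omega(\lambda^2)$, far better than required. Otherwise $\Re(w) < 0$, and we invoke a conservation argument: setting $A'' := A \setminus A'$, $\Gamma'' := \Gamma \setminus \Gamma'$, expand the identity $\mu|A||\Gamma| = \Re\sum_{A \times \Gamma}M$ as a sum over the four rectangles $A^{(\pm)}\times\Gamma^{(\pm)}$. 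An $\Re M$-deficit on $A' \times \Gamma'$ of magnitude $c\lambda$ forces one of the three complementary rectangles to carry an $\Re M$-surplus whose density exceeds $\mu$ by $\Omega(\lambda \cdot |A'||\Gamma'|/|A||\Gamma|) = \Omega(\lambda^3)$; each such rectangle still has both sides of relative size $\Omega(\lambda)$. Since $\mu$ is non-negative real, this surplus translates directly into a modulus increment of the same order. The exponent $15$ in the statement is a generous collective bound absorbing the losses from the $16$-way pigeonhole, the layer-cake excision, the conservation trade-off, and normalization constants; no attempt is made to optimize it.
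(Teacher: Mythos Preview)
Your overall architecture is close to the paper's: both arguments reduce to finding a sub-rectangle on which $M_0 = M - \mu$ has large average, then upgrade that to a modulus increment for $\mu + w$. Your observation that $\E[M(A',\Gamma')] = \E_{\gamma\in\Gamma'}|\gamma(A')|$ is real and dominates $|\E_{A'\times\Gamma'}[M(A,\Gamma)]|$ is a clean simplification; the paper instead rotates $M$ by a global phase. Your layer-cake decomposition is a continuous variant of the paper's step-function approximation (their Claim~2.4), and the pigeonhole conclusion---a rectangle $A'\times\Gamma'$ with both sides of relative size $\Omega(\lambda)$ and $|w| \ge c\lambda$---is essentially the same as theirs (they get $c_1\lambda^5$ from $k^2$ steps).

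There is, however, a genuine gap in your final case split. Your pigeonhole gives $\epsilon R(w)\ge c\lambda$ for a specific $R\in\{\Re,\Im\}$ and sign $\epsilon$, so you know \emph{which} of $\pm\Re(w),\pm\Im(w)$ is large. Your dichotomy ``$\Re(w)\ge 0$ versus $\Re(w)<0$'' does not respect this. In the bad case $R=\Im$, $|\Im(w)|\ge c\lambda$, and $\Re(w)$ is slightly negative (say $-\lambda^{100}$): your first branch fails since $\Re(w)<0$; your second branch asserts ``an $\Re M$-deficit of magnitude $c\lambda$,'' but the real deficit is only $\lambda^{100}$, so conservation yields a negligible surplus. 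And the direct bound $|\mu+w|^2 = (\mu+\Re w)^2 + (\Im w)^2$ can genuinely drop below $\mu^2$ here when $\mu$ is close to $1$ (the cross term $2\mu\Re(w)$ beats $|w|^2$ once $|\Re(w)|$ exceeds $c^2\lambda^2/2$, long before it reaches $c\lambda$).

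The fix is the trichotomy the paper uses on its four rectangles: either some $\Re(\alpha_i)\ge \delta|A||\Gamma|$ (done), or some $\Re(\alpha_i)\le -\delta|A||\Gamma|$ (then $\sum\alpha_i=0$ forces a positive one), or $|\Re(\alpha_i)|\le\delta|A||\Gamma|$ for \emph{all} four---in which case one returns to the known-large rectangle and uses $|\alpha_1+\rho\beta_1|^2 \ge \rho^2\beta_1^2 + |\alpha_1|^2 - 2\rho\beta_1|\Re\alpha_1|$, where the small real part is now controlled by $\delta$ while $|\alpha_1|$ is still large. Equivalently, in your framework, split at $\Re(w)\ge -c'\lambda^2$ rather than at $0$: above that threshold the Pythagorean bound still gives $|\mu+w|\ge\mu+\Omega(\lambda^2)$, and below it the real deficit is genuinely $\Omega(\lambda^2)$ so conservation bites. (Boundedness of $M_0$ then forces the surplus rectangle to have both sides large, as you claim.)
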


Assuming that  $M  = M(A, \Gamma)$ is not $\lambda$-regular, there are functions $f, g$ with $\|f\|_{\infty}=\|g\|_{\infty}=1$, at least one of which is orthogonal to the identity function,
such that $\left| f^T  M g \right| \geq \lambda |A||\Gamma|$. As a first step towards proving Lemma~\ref{lemma:minor}, we approximate $f,g$ by step functions $\widetilde{f}$ and $\widetilde{g}$, respectively.

\begin{claim}\label{claim:step}
Fix $\eta>0$. Let $f:A \to \C$ be a function with $\|f\|_{\infty}=1$. Then there exists a function $\widetilde{f}:A \to \C$ such that
$$\| f - \widetilde{f}\|_\infty \leq \eta$$
with $\widetilde{f} = \sum_{i=1}^{k} \alpha_i \mathbf{1}_{A_i}$, where $A_i \subseteq A$ are disjoint subsets and $\alpha_i \in \mathbb{C}$ with $|\alpha_i| \le 1$. Moreover, $k \leq \frac{100 }{\eta^2}$.
\end{claim}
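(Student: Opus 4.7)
The plan is to give a direct discretization argument. Since $\|f\|_\infty \le 1$, the image $f(A)$ is contained in the closed unit disk $D = \{z \in \mathbb{C} : |z| \le 1\}$. I would partition $D$ (or rather the bounding square $[-1,1]^2$) into small complex squares, pull the partition back through $f$, and on each resulting piece replace $f$ by the constant value of a representative point. Any two points inside a square of side $s$ differ in modulus by at most $s\sqrt{2}$, so choosing $s$ slightly smaller than $\eta/\sqrt{2}$ gives the desired $L_\infty$ approximation.

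Concretely, I would fix $s = \eta/2$ and consider the grid of half-open squares $S_{j,k} = [js,(j+1)s) \times [ks,(k+1)s) \subseteq \mathbb{C}$ for integers $j,k$. For each square that meets $D$, let $A_{j,k} = \{a \in A : f(a) \in S_{j,k}\}$ and pick $\alpha_{j,k}$ to be the value $f(a_0)$ for any $a_0 \in A_{j,k}$ (so that automatically $|\alpha_{j,k}| \le 1$). Discard the empty pieces and enumerate the remaining ones to form $\widetilde{f} = \sum_i \alpha_i \mathbf{1}_{A_i}$. The $A_i$ are disjoint by construction, and for every $a \in A_i$ both $f(a)$ and $\alpha_i$ lie in the same square of diameter $s\sqrt{2} = \eta/\sqrt{2} < \eta$, giving $\|f - \widetilde{f}\|_\infty < \eta$.

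For the bound on $k$: only the squares $S_{j,k}$ that intersect the disk $D \subseteq [-1,1]^2$ can contribute, and the number of grid squares of side $s = \eta/2$ fitting inside $[-1,1]^2$ is $(2/s)^2 = 16/\eta^2 \le 100/\eta^2$. I expect no real obstacle here; the argument is purely geometric counting and the constant $100$ is quite generous, so even crude grid choices work.
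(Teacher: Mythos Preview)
Your proof is correct and essentially the same as the paper's: both discretize the unit disk into $O(1/\eta^2)$ small regions and set $\widetilde f$ constant on each preimage, the only cosmetic difference being that the paper partitions in polar coordinates (annular sectors by magnitude and argument) while you use a Cartesian grid of squares. Your count ``$(2/s)^2 = 16/\eta^2$'' technically ignores the extra boundary squares that can intersect $D$ without lying inside $[-1,1]^2$, but as you note the constant $100$ is generous enough to absorb this lower-order term.
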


\begin{proof}
We partition $A$ based on the phase and magnitude of $f$. For $r=\lceil 10/\eta \rceil$ define
$$
A_{j, k} = \left\{ a\in A:  j/r < |f(a)| \leq (j+1)/r \textrm { and } 2\pi k/r < \arg{f(a)} \leq 2\pi (k+1)/r \right\}.
$$
We partition $A$ to subsets $A_{j, k}$ for $j,k \in \{0,\ldots,r-1\}$. Define the step function $\widetilde{f}$ as
$$
\widetilde{f} = \sum_{j,k=0}^{r-1} j/r \cdot e^{(2\pi i) k/r } \cdot \mathbf{1}_{A_{j,k}}.
$$
It is easy to verify that for all $a \in A$, $| f(a) - \widetilde{f} (a)| \leq \eta |f(a)|$ as claimed.
\end{proof}

We proceed with the proof of Lemma~\ref{lemma:minor}.

\begin{proof}[Proof of Lemma~\ref{lemma:minor}]
Let $\rho := \E[M]$ be the average of $M$, and define a matrix $M'$ by $M'_{a,\gamma} = M_{a,\gamma}-\rho$, so that $\E[M']=0$. Note that $|M'_{a,\gamma}| \le 2$
for all $a \in A, \gamma \in \Gamma$. We may assume for simplicity that $\rho$ is real
and nonnegative, by multiplying all entries of $M$ by an appropriate phase $e^{i \theta}$, as this does not change any of the properties at hand.

As we assume $M$ is not $\lambda$-regular, there exist functions $f:A \to \C, g: \Gamma \to \C$ with $\|f\|_{\infty},\|g\|_{\infty}=1$, one of which at least sums to zero, such that
$|f^T M g| \ge \lambda |A| |\Gamma|$. Note that $f^T M' g = f^T M g$. Let $\widetilde{f},\widetilde{g}$ be their step function approximations given by Claim~\ref{claim:step} for $\eta = \lambda/8$,
where $\widetilde{f} = \sum_{i=1}^{k} \alpha_i \mathbf{1}_{A_i}, \widetilde{g} = \sum_{i=1}^{k} \beta_i \mathbf{1}_{\Gamma_i}$ and $k\leq \frac{100}{\eta^2}$. Moreover
$$
\left| \widetilde{f}^T M' \widetilde{g}\right| \geq |f^T M' g| - |(f-\widetilde{f})^T M' g| - |\widetilde{f}^T M' (g-\widetilde{g})| \ge \lambda/2\cdot |A| |\Gamma|.
$$
That is,
$$
\left| \sum_{i,j=1}^k \alpha_i \beta_j \mathbf{1}_{A_i}^T M' \mathbf{1}_{\Gamma_j} \right| \ge \lambda/2 \cdot |A| |\Gamma|.
$$
In particular, there must exist $A_i, \Gamma_j$ such that
$$
\left| \mathbf{1}_{A_i}^T M' \mathbf{1}_{\Gamma_j} \right| \ge (\lambda/2k^2) \cdot |A| |\Gamma| \ge c_1 \lambda^5 \cdot |A| |\Gamma|,
$$
where $c_1 > 0$ is an absolute constant.

If we knew that $\mathbf{1}_{A_i}^T M' \mathbf{1}_{\Gamma_j}$ is real and nonnegative, say, then
we would be done by choosing $A'=A_i, \Gamma'=\Gamma_j$ as then $\E[M(A',\Gamma')] \ge \rho + c_1 \lambda^5$. However, it may be that its real part is negative, cancelling the average.
To overcome this, we consider choosing $A' \in \{A_i, A_i^c\}, \Gamma' \in \{\Gamma_j, \Gamma_j^c\}$ (where $A_i^c = A \setminus A_i, \Gamma_j^c = \Gamma \setminus \Gamma_j$)
and show that one of the choices satisfies the required properties. Set
$$
\alpha_1 := 1_{A_i}^T M' 1_{\Gamma_j}, \alpha_2 := 1_{A_i^c}^T M' 1_{\Gamma_j}, \alpha_3 := 1_{A_i}^T M' 1_{\Gamma_j^c}, \alpha_4 := 1_{A_i^c}^T M' 1_{\Gamma_j^c}
$$
and
$$
\beta_1 := |A_i| |\Gamma_j|, \beta_2 := |A_i^c| |\Gamma_j|, \beta_3 := |A_i| |\Gamma_j^c|, \beta_4 := |A_i^c| |\Gamma_j^c|.
$$
Fix $\delta=c \lambda^{15}$ for an absolute constant $c>0$ to be chosen later.
We will show that for some $i \in \{1,2,3,4\}$, we have $|\beta_i| \ge \delta |A| |\Gamma|$ and $|\alpha_i+\rho \beta_i| \ge (\rho + \delta) \beta_i$. This implies that
if we take $A',\Gamma'$ to be the corresponding sets, then $|A'|\ge \delta |A|, |\Gamma'| \ge \delta |\Gamma|$ and
$|1_{A'} M 1_{\Gamma'}| = |\alpha_i + \rho \beta_i| \ge (\rho+\delta) |A'| |\Gamma'|$.

In order to show that, let us note that $\sum \alpha_i=0$, $|\alpha_1| \ge c_1 \lambda^5 |A| |\Gamma|$, $\beta_1 \ge c_1 \lambda^5 |A| |\Gamma|$, and the $\beta_i$ are real nonnegative numbers with $\sum \beta_i = |A| |\Gamma|$.
If for some $i$ we have $\mathrm{Re}(\alpha_i) \ge \delta |A| |\Gamma|$ then
$|\alpha_i + \rho \beta_i| \ge \mathrm{Re}(\alpha_i+\rho \beta_i) \ge \delta |A| |\Gamma|+ \rho \beta_i \ge (\rho+\delta) \beta_i$ and we are done. If $\mathrm{Re}(\alpha_i) \le -\delta |A| |\Gamma|$
then, since $\sum \alpha_i=0$, there exists some $j \ne i$ for which $\mathrm{Re}(\alpha_j) \ge \delta/3 \cdot |A| |\Gamma|$, and we are done by the previous argument. So, we may assume that $|\mathrm{Re}(\alpha_i)| \le \delta |A| |\Gamma|$
for all $i$. In particular $|\mathrm{Re}(\alpha_1)| \le (\delta/c_1 \lambda^5) \beta_1$. Hence
\begin{align*}
|\alpha_1 + \rho \beta_1|^2 & = |\rho \beta_1 + \mathrm{Re}(\alpha_1)|^2 + \mathrm{Im}(\alpha_1)^2 \\
&\ge \rho^2 \beta_1^2 + |\alpha_1|^2 - 2 \rho \beta_1 |\mathrm{Re}(\alpha_1)| \\
&\ge \beta_1^2 (\rho^2 + c_1^2 \lambda^{10} - 2 \delta/c_1 \lambda^5).\\
&\ge \beta_1^2 (\rho^2 + (c_1^2 - 2 c/c_1) \lambda^{10}),\\
\end{align*}
where we used our choice of $\delta = c \lambda^{15}$. If we choose $c>0$ small enough,  we conclude that also in this case, $|\alpha_1+\rho \beta_1| \ge (\rho + \delta) \beta_1$.
\end{proof}

We now combine Lemma~\ref{lemma:regdouble} and Lemma~\ref{lemma:minor} in order to prove Theorem~\ref{thm:spec_dbl}. The high level idea is the following.
Initialize $\rho=\eps, \Gamma=\spec{\eps}{A}$. If $M(A,\Gamma)$ is $\lambda$-regular for $\lambda=\eps \rho/150$, and $|\spec{\eps \rho/2}{A}|\approx|\Gamma|$, then the proof follows from Lemma~\ref{lemma:regdouble} and Parseval's identity. Otherwise, one of two cases must occur. The first case that could occur is that $M(A,\Gamma)$ is not $\lambda$-regular. Then by Lemma~\ref{lemma:minor} we can replace $A,\Gamma$ with $A',\Gamma'$ and increase $\rho$ by a noticeable amount. This cannot occur too many times, as $\rho \le 1$. The second case that could occur is that $|\spec{\eps \rho/2}{A}| \gg |\Gamma| \approx \spec{\rho}{A}$. In such a case, we set $\rho = \eps \rho/2$ and increase the spectrum of $A$ by a noticeable amount. As the spectrum
is bounded by $|G|$, this again cannot happen too many times. Combining these steps together requires a somewhat delicate balance act.

Let $K=K(\eps,\delta)$ be a parameter to be optimized later.
We define a sequence of sets $A_i \subseteq A$ and parameters $\rho_i \in [0,1]$ for $i \ge 1$, where initially $A_0=A, \rho_0 = \eps$. Given $A_i,\rho_i$ set $\lambda_i  = \eps \rho_i / 150$
and run the following procedure:
\begin{enumerate}
\item[(i)] If $M(A_i,\spec{\rho_i}{A_i})$ is $\lambda_i$-regular and $|\spec{\eps \rho_i/2}{A_i}| \le K |\spec{\rho_i}{A_i}|$, then set $A^*=A_i$ and finish.
\item[(ii)] If $M(A_i,\spec{\rho_i}{A_i})$ is not $\lambda_i$-regular then apply Lemma~\ref{lemma:minor} to $A_i$ and $\spec{\rho_i}{A_i}$.  Let $A' \subseteq A_i, \Gamma' \subseteq \spec{\rho_i}{A_i}$ be the resulting sets such that $|A'| \ge c \lambda_i^{15} |A_i|$, $|\Gamma'| \ge c \lambda_i^{15} |\Gamma_i|$
    and
    $|\E[M(A',\Gamma')]| \ge \rho_i + c \lambda_i^{15}$. Set $A_{i+1}=A'$
    and $\rho_{i+1} = \rho_i + (c/2) \lambda_i^{15}$. Return to step (i).
\item[(iii)] If $|\spec{\eps \rho_i/2}{A_i}| > K |\spec{\rho_i}{A_i}|$ then set $A_{i+1}=A_i$ and $\rho_{i+1}=\eps \rho_i/2$. Return to step (i).
\end{enumerate}

Next, we analyze this procedure. First, note that if the procedure ends with $A^*=A_i$ then by Lemma~\ref{lemma:regdouble} and Parseval's identity we have that
\begin{equation}\label{eq:spec_bounds_interim}
|\spec{\eps}{A^*}-\spec{\eps}{A^*}| \le 2K |\spec{\eps\rho_i/2}{A_i}| \le \frac{8 K |G|}{\eps^2 \rho_i^2 |A_i|}.
\end{equation}
So, we need to show that $\rho_i,|A_i|$ are never too small. Suppose that stages (ii) and (iii) occur $k_1$ and $k_2$ times, respectively. Let $\eta: \{ 1,\ldots,k_2\}\rightarrow \{ 1,\ldots,k_1+k_2 \}$ be the ordered indices of occurrences of stage (iii). We first bound $k_1$.

\begin{claim}\label{claim:k0}
If $i < \eta(j)$ then $\rho_i \ge (\eps/2)^j$.
\end{claim}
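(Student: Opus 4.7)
The plan is to track how $\rho_i$ evolves with $i$ and observe that it can only shrink at indices where stage (iii) is applied. At any stage (ii) step the update $\rho_{i+1} = \rho_i + (c/2) \lambda_i^{15}$ strictly increases $\rho$, while at any stage (iii) step the update $\rho_{i+1} = \eps \rho_i/2$ scales $\rho$ down by exactly the factor $\eps/2$; stage (i) terminates the procedure and so never produces an update. Therefore, for every index $i$,
$$
\rho_i \ge \rho_0 \cdot (\eps/2)^{N(i)} = \eps \cdot (\eps/2)^{N(i)},
$$
where $N(i)$ denotes the number of indices $i' < i$ at which stage (iii) was executed.

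To conclude, I would use the definition of $\eta$ to bound $N(i)$. Since $\eta(j)$ is the index at which the $j$-th stage (iii) iteration occurs, at most $j-1$ stage (iii) iterations lie at indices strictly less than $\eta(j)$. Hence $i < \eta(j)$ forces $N(i) \le j-1$, and plugging this into the previous display gives
$$
\rho_i \ge \eps \cdot (\eps/2)^{j-1} = 2 (\eps/2)^j \ge (\eps/2)^j,
$$
which is the claim.

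I do not foresee any substantial obstacle: the argument is essentially bookkeeping about the two mechanisms that change $\rho_i$, together with the defining property of $\eta$. The only mild subtlety is to handle the indexing carefully, making sure to count the stage (iii) iterations \emph{strictly} before $i$ rather than up to and including $i$; isolating this count as $N(i)$ avoids any off-by-one issues. Consequently I would not bother to set up a formal induction on $j$; a direct argument as above suffices.
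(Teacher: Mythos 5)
Your proof is correct and matches the paper's argument exactly: both note that $\rho$ only decreases at stage (iii) (by a factor of $\eps/2$) and increases at stage (ii), bound the number of stage (iii) applications before index $\eta(j)$ by $j-1$, and conclude $\rho_i \ge \rho_0 (\eps/2)^{j-1} = \eps(\eps/2)^{j-1} \ge (\eps/2)^j$. The introduction of $N(i)$ is just an explicit bookkeeping device for the same reasoning.
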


\begin{proof}
The value of $\rho_i$ increases in step (ii), and decreases in step (iii) by a factor of $\eps/2$. If $i < \eta(j)$ then we applied step (iii) at most $j-1$ times, hence $\rho_i \ge (\eps/2)^{j-1} \rho_0 \ge (\eps/2)^j$.
\end{proof}

\begin{claim}\label{claim:k1}
For $\forall j\in \{1,\dots,k_2-1\}$, $\left| \eta(j+1) - \eta(j)  \right| \le  (1/\eps)^{O(j)}$.
\end{claim}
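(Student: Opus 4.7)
The plan is to observe that in the open interval $(\eta(j), \eta(j+1))$ every iteration of the procedure is of type (ii), and then to cap the number of such iterations via a potential argument: $\rho$ strictly increases at each type (ii) step by an amount that can be lower-bounded in terms of $j$, while $\rho$ is always in $[0,1]$. The only mildly delicate point is the bookkeeping needed to rule out an intervening (iii) step or an early termination at step (i); once that is settled the rest is a direct computation.

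For the bookkeeping, by the definition of $\eta$ as the ordered list of occurrences of step (iii), no (iii) step takes place strictly between $\eta(j)$ and $\eta(j+1)$. Moreover the hypothesis $j \le k_2 - 1$ implies that step (iii) fires at least $j+1$ times overall, so the procedure has not yet terminated at step (i) by iteration $\eta(j+1)$. Therefore every $i$ with $\eta(j) < i < \eta(j+1)$ must be a type (ii) iteration.

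For the potential argument, I would apply Claim~\ref{claim:k0} with index $j+1$ to conclude that $\rho_i \ge (\eps/2)^{j+1}$ throughout the interval. Consequently $\lambda_i = \eps \rho_i/150 \ge c_0\,\eps\,(\eps/2)^{j+1}$, and each type (ii) iteration increases $\rho$ by at least
$$
(c/2)\lambda_i^{15} \;\ge\; c_1\,\eps^{15}\,(\eps/2)^{15(j+1)} \;=\; \eps^{\,O(j)},
$$
for absolute constants $c_0, c_1 > 0$. Telescoping these non-negative increments against the total budget $\rho_i \in [0,1]$ bounds the number of type (ii) steps strictly between $\eta(j)$ and $\eta(j+1)$ by $(1/\eps)^{O(j)}$; adding $1$ for the (iii) step at $\eta(j+1)$ itself yields $\eta(j+1) - \eta(j) \le (1/\eps)^{O(j)}$, as claimed.
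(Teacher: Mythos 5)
Your proof is correct and follows essentially the same strategy as the paper's: using Claim~\ref{claim:k0} to lower-bound $\rho_i$ (and hence $\lambda_i$) on the interval between consecutive (iii) steps, then bounding the number of (ii) steps via the fact that $\rho$ increases by $(c/2)\lambda_i^{15} \ge \eps^{O(j)}$ at each one while staying below $1$. The bookkeeping you spell out (no intervening (iii) step, no early termination at step (i) given $j \le k_2 - 1$) is implicit in the paper but is a correct and worthwhile observation to make explicit.
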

\begin{proof}
Consider a step $i$ for $\eta(j) \leq i \leq \eta(j+1)$. We have that $\rho_{i+1} \ge \rho_i + (c/2) (\rho_i \eps/150)^{15} \ge \rho_i + c' \eps^{15(j+2)}$,
where $c,c'>0$ are absolute constants.  As $\rho_{i}$ never exceeds $1$ for all $i$, this process cannot repeat more than $(1/c') (1/\eps)^{15 (j+2)}$ times.
As we assume $\eps<1/2$, this is bounded by $(1/\eps)^{c' j}$ for a large enough $c'>0$.
\end{proof}

\begin{corollary}\label{cor:bound_k1}
$k_1 \le (1/\eps)^{O(k_2)}$.
\end{corollary}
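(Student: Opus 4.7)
The plan is to use Claim~\ref{claim:k1} segment by segment. The $k_1$ iterations of step (ii) are partitioned by the $k_2$ iterations of step (iii) into at most $k_2+1$ consecutive blocks: the block before $\eta(1)$, the blocks strictly between $\eta(j)$ and $\eta(j+1)$ for $1 \le j \le k_2-1$, and the block after $\eta(k_2)$ (which ends when the procedure terminates in step (i)). Each such block consists entirely of step (ii) iterations, so it suffices to bound the length of each block.

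For the middle blocks, Claim~\ref{claim:k1} directly gives that the block strictly between $\eta(j)$ and $\eta(j+1)$ has length at most $(1/\eps)^{O(j)}$. For the first and last blocks I would repeat the same argument as in the proof of Claim~\ref{claim:k1}: during any block of step (ii) iterations, one has a uniform lower bound on $\rho_i$ coming from Claim~\ref{claim:k0} (namely $\rho_i \ge (\eps/2)^{k_2+1}$ throughout the whole run), so each step (ii) increments $\rho_i$ by at least $c' \eps^{15(k_2+2)}$, and since $\rho_i \le 1$ this can happen at most $(1/\eps)^{O(k_2)}$ times in any single block.

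Summing the block lengths then gives
\begin{equation*}
k_1 \;\le\; \sum_{j=0}^{k_2} (1/\eps)^{O(j)} \;\le\; (k_2+1)\,(1/\eps)^{O(k_2)} \;=\; (1/\eps)^{O(k_2)},
\end{equation*}
where in the last step I absorb the linear factor $k_2+1$ into the exponent (using $\eps < 1/2$ so that $k_2+1 \le 2^{k_2+1} \le (1/\eps)^{k_2+1}$). I do not anticipate a real obstacle here; the statement is a routine bookkeeping consequence of Claim~\ref{claim:k0} and Claim~\ref{claim:k1}. The only small care needed is to verify that a uniform lower bound on $\rho_i$ also holds during the final block after $\eta(k_2)$, which again follows from Claim~\ref{claim:k0} applied with $j=k_2+1$ (or directly, since step (iii) is never applied there and step (ii) only increases $\rho$).
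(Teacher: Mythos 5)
Your proposal is correct and takes essentially the same approach as the paper: partition the step (ii) iterations into blocks delimited by the step (iii) iterations, bound each block via Claim~\ref{claim:k1} (reusing the same $\rho$-increment argument with the lower bound from Claim~\ref{claim:k0} for the first and last blocks), and sum. The paper's own proof is a one-line summation $k_1 \le \sum_{j=1}^{k_2}(1/\eps)^{O(j)}$; you simply spell out the boundary blocks that the paper leaves implicit.
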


\begin{proof}
By claim~\ref{claim:k1}, $k_1 \le \sum_{j=1}^{k_2} (1/\eps)^{O(j)} \le (1/\eps)^{O(k_2)}$.
\end{proof}

We next upper bound $k_2$. To do so, we will show that in step (ii) we have that $\spec{\rho_{i+1}}{A_{i+1}}$ is not much smaller than $\spec{\rho_{i}}{A_{i}}$.

\begin{claim}\label{clm:Gamma_size_step_2}
Assume that we run step (ii) in iteration $i$. Then
$$
|A_{i+1}| \ge c \lambda_i^{15}  |A_{i}|
$$
and
$$
|\spec{\rho_{i+1}}{A_{i+1}}| \ge c \lambda_i^{30} |\spec{\rho_{i}}{A_{i}}|,
$$
where $c>0$ is an absolute constant.
\end{claim}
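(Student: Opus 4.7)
The first inequality, $|A_{i+1}|\ge c\lambda_i^{15}|A_i|$, is immediate: by construction $A_{i+1}=A'$ and Lemma~\ref{lemma:minor} already delivers $|A'|\ge c\lambda_i^{15}|A_i|$. So the work is in showing that a non-negligible fraction of $\Gamma':=\spec{\rho_{i+1}}{A_{i+1}}$ survives from $\Gamma:=\spec{\rho_i}{A_i}$.

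My plan is to reinterpret the quantity $\E[M(A',\Gamma')]$ that Lemma~\ref{lemma:minor} produces. Expanding the definition,
\[
\E[M(A',\Gamma')] \;=\; \frac{1}{|A'||\Gamma'|}\sum_{a\in A',\,\gamma\in\Gamma'} \gamma(a)\,\frac{\overline{\gamma(A')}}{|\gamma(A')|}
\;=\; \frac{1}{|\Gamma'|}\sum_{\gamma\in\Gamma'}|\gamma(A')|,
\]
so $\E[M(A',\Gamma')]$ is simply the average of $|\gamma(A')|$ over $\gamma\in\Gamma'$. Applied to the starting matrix, $\E[M(A_i,\spec{\rho_i}{A_i})]\ge\rho_i$, since every character in $\spec{\rho_i}{A_i}$ contributes at least $\rho_i$ to the average. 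Lemma~\ref{lemma:minor} therefore gives
\[
\frac{1}{|\Gamma'|}\sum_{\gamma\in\Gamma'}|\gamma(A')| \;\ge\; \rho_i+c\lambda_i^{15} \;=\; \rho_{i+1}+\tfrac{c}{2}\lambda_i^{15},
\]
using the update rule $\rho_{i+1}=\rho_i+\tfrac{c}{2}\lambda_i^{15}$.

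The next step is a standard reverse-Markov: since $|\gamma(A')|\le 1$ for every character while the average exceeds $\rho_{i+1}$ by $\tfrac{c}{2}\lambda_i^{15}$, the fraction of $\gamma\in\Gamma'$ with $|\gamma(A')|\ge \rho_{i+1}$ is at least $\tfrac{c}{2}\lambda_i^{15}$. Equivalently,
\[
|\Gamma'\cap \spec{\rho_{i+1}}{A'}| \;\ge\; \tfrac{c}{2}\lambda_i^{15}\,|\Gamma'|.
\]
Combining with the lower bound $|\Gamma'|\ge c\lambda_i^{15}|\spec{\rho_i}{A_i}|$ from Lemma~\ref{lemma:minor} and observing that $A_{i+1}=A'$ yields
\[
|\spec{\rho_{i+1}}{A_{i+1}}| \;\ge\; |\Gamma'\cap \spec{\rho_{i+1}}{A'}| \;\ge\; c'\lambda_i^{30}\,|\spec{\rho_i}{A_i}|,
\]
as desired. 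There is no real obstacle here; the only point that must be gotten right is the algebraic identity $\E[M(A',\Gamma')]=\E_{\gamma\in\Gamma'}|\gamma(A')|$, which is what makes the gain $c\lambda_i^{15}$ in the matrix average translate, via reverse-Markov, into many characters actually lying in the new spectrum.
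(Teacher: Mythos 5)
Your proof is correct and takes essentially the same route as the paper. You make explicit the identity $\E[M(A',\Gamma')] = \E_{\gamma\in\Gamma'}|\gamma(A')|$, then run a reverse-Markov argument to conclude that a $(c/2)\lambda_i^{15}$ fraction of $\Gamma'$ has $|\gamma(A_{i+1})|\ge\rho_{i+1}$; the paper does precisely this, writing the reverse-Markov step as the upper bound $|\E[M(A_{i+1},\Gamma')]| \le \frac{|S|}{|\Gamma'|} + (1-\frac{|S|}{|\Gamma'|})\rho_{i+1}$ with $S=\Gamma'\cap\spec{\rho_{i+1}}{A_{i+1}}$, but using the same identity silently.

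One small point worth internalizing: the proof of Lemma~\ref{lemma:minor} actually bounds the average of the \emph{restricted} matrix (entries still using the phase $\overline{\gamma(A)}/|\gamma(A)|$ from the original $A$), whereas your computation is for the matrix defined afresh on $A'$ (phase $\overline{\gamma(A')}/|\gamma(A')|$). These differ, but the fresh average is $\E_{\gamma}|\gamma(A')|$, which dominates the magnitude of the restricted average by the triangle inequality, so the gain $c\lambda_i^{15}$ transfers to your quantity and your argument goes through.
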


\begin{proof}
We apply in step (ii) Lemma~\ref{lemma:minor} to $A_i, \spec{\rho_i}{A_i}$. We get subsets $A_{i+1} \subseteq A_i, \Gamma' \subseteq \spec{\rho_i}{A_i}$ such that
$|A_{i+1}| \ge c \lambda_i^{15} |A_i|$, $|\Gamma'| \ge c \lambda_i^{15} |\spec{\rho_i}{A_i}|$ and $\rho_{i+1} \leq|\E[M(A_{i+1},\Gamma')]| - (c/2) \lambda_i^{15}$. Let $S = \Gamma' \cap \spec{\rho_{i+1}}{A_{i+1}}$. Then
$$
|\E[M(A_{i+1},\Gamma')]| \le \frac{|S|}{|\Gamma'|} + \left( 1 - \frac{|S|}{|\Gamma'|}\right)\rho_{i+1}.
$$
Hence $|\spec{\rho_{i+1}}{A_{i+1}}| \ge |S| \ge (c/2) \lambda_i^{15} |\Gamma'|$ and the claim follows.
\end{proof}

Combining Claim~\ref{claim:k1} and Claim~\ref{clm:Gamma_size_step_2}, we deduce that, for any $j \in \{1,\ldots,k_2-1\}$, the ratio in the size of the spectrums
immediately after the $j$-th application of step (iii), and immediately before the $j+1$ application of step (iii), is lower bounded by
\begin{align*}
T_j &:= \frac{|\spec{\rho_{\eta(j)}}{A_{\eta(j)}}|}{|\spec{\rho_{\eta(j+1)-1}}{A_{\eta(j+1)-1}}|} \le \prod_{i=\eta(j)}^{\eta(j+1)-2} \frac{1}{c \lambda_i^{30}} \le \left(\frac{1}{c}  \left(\frac{150 \cdot 2^j}{\eps^{j+1}}\right)^{30} \right)^{\eta(j+1)-\eta(j)} \\
&\le (1/\eps)^{O(j \cdot (1/\eps)^{O(j)})} \le \exp\left( (1/\eps)^{O(j)}\right).
\end{align*}
We will choose $K$ large enough so that $T_j \le K^{1/2}$ for all $j < k_2$, and hence
$$
|\spec{\rho_{\eta(j+1)}}{A_{\eta(j+1)}}| \ge K \cdot |\spec{\rho_{\eta(j+1)-1}}{A_{\eta(j+1)-1}}| \ge K^{1/2} \cdot |\spec{\rho_{\eta(j)}}{A_{\eta(j)}}|.
$$
Fix $K=|G|^{\delta}$ and $C=\exp((1/\eps)^{O(1/\delta)})$. We may assume that $|G| \ge C$, as otherwise our bounds are trivial. Then,
we must have $k_2 \le 2/\delta$ and hence $k_1 \le (1/\eps)^{O(1/\delta)}$. We conclude that
$$
\frac{|A|}{|A^*|} \le \prod_{i=1}^{k_1+k_2} \frac{1}{c \lambda_i^{15}} \le \exp\left((1/\eps)^{O(1/\delta)}\right)
$$
and that plugging these estimates into Equation \eqref{eq:spec_bounds_interim} implies that
$$
\left| \spec{\eps}{A^*} - \spec{\eps}{A^*}\right| \le \exp \left((1/\eps)^{O(1/\delta)} \right) \cdot  |G|^{1-\alpha+\delta}.
$$
Since the definition of the spectrum is symmetric, $\spec{\eps}{A^*}=-\spec{\eps}{A^*}$, this implies the same bounds on $\left| \spec{\eps}{A^*} + \spec{\eps}{A^*}\right|$.

\section{Proof of Theorem~\ref{thm:spec_dbl2}}
\label{sec:proof2}

The proof of theorem~\ref{thm:spec_dbl2} is very similar to the proof of theorem~\ref{thm:spec_dbl}, with a few small tweaks.
First, we use Lemma~\ref{lemma:regular} and Lemma~\ref{lemma:regdouble} in the special case of $\rho = \eps$. We restate Lemma~\ref{lemma:regdouble} in this special case.
\begin{lemma}\label{lemma:regdouble2}
If $M = M(A, \spec{\eps}{A}) $ is $\frac{\eps^2}{150}$-regular, then
$$\left| \spec{\eps}{A} - \spec{\eps}{A}\right| \leq 2 \frac{\left| \spec{\eps^2/2}{A}\right|^2}{ \left| \spec{\eps}{A}\right|} .$$
\end{lemma}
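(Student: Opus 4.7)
The lemma is exactly the statement of Lemma \ref{lemma:regdouble} with the substitution $\rho = \eps$, so my plan is simply to observe this and apply Lemma \ref{lemma:regdouble} directly. The regularity hypothesis $\frac{\eps^2}{150}$ is precisely $\frac{\eps \rho}{150}$ with $\rho = \eps$, the spectrum $\spec{\eps}{A}$ that $M$ is built on matches the $\spec{\rho}{A}$ used in Lemma \ref{lemma:regdouble}, and the conclusion $\left| \spec{\eps \rho/2}{A}\right|^2 / \left| \spec{\rho}{A}\right|$ becomes $\left| \spec{\eps^2/2}{A}\right|^2 / \left| \spec{\eps}{A}\right|$ under the same substitution. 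No additional work is required.

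For completeness I would briefly recall the mechanism, since it is the engine behind the restated form. Given arbitrary $\gamma_1, \gamma_2 \in \spec{\eps}{A}$, I would invoke Lemma \ref{lemma:regular} with $\rho = \eps$ to produce subsets $\Gamma_1, \Gamma_2 \subseteq \spec{\eps}{A}$, each of size at least $0.9 \, |\spec{\eps}{A}|$, for which $\gamma_1 + \Gamma_1, \gamma_2 + \Gamma_2 \subseteq \spec{\eps^2/2}{A}$. By inclusion-exclusion $|\Gamma_1 \cap \Gamma_2| \ge 0.8 \, |\spec{\eps}{A}|$, and for each $\gamma$ in this intersection the identity $\gamma_1 - \gamma_2 = (\gamma_1 + \gamma) - (\gamma_2 + \gamma)$ exhibits $\gamma_1 - \gamma_2$ as a difference of two elements of $\spec{\eps^2/2}{A}$. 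Double counting ordered pairs then yields
$$
\left| \spec{\eps}{A} - \spec{\eps}{A}\right| \cdot 0.8 \, |\spec{\eps}{A}| \le \left| \spec{\eps^2/2}{A}\right|^2,
$$
which gives the claimed bound (with the constant $2$ absorbing $1/0.8$).

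There is no real obstacle here; the statement is genuinely a corollary used to streamline the notation for Section \ref{sec:proof2}, where the iteration in $\rho$ that motivated the more general Lemma \ref{lemma:regdouble} is unnecessary because we will keep $\rho = \eps$ fixed throughout and instead iterate only on the regularization step coming from Lemma \ref{lemma:minor}.
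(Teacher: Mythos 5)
Your proposal is correct and matches the paper exactly: Lemma~\ref{lemma:regdouble2} is simply Lemma~\ref{lemma:regdouble} specialized to $\rho = \eps$, which is precisely how the paper presents it (as a restatement, with no separate proof given). The brief recap of the double-counting mechanism is accurate and consistent with the proof of Lemma~\ref{lemma:regdouble}.
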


We combine Lemma~\ref{lemma:regdouble2} with Lemma~\ref{lemma:minor} to prove Theorem~\ref{thm:spec_dbl2}.
The difference is in the iterative refinement process. Here, instead of setting $\lambda_i = \eps\rho_i/150$, we instead set $\lambda_i = \rho_i^2/150$. To be more precise, initialize $\Gamma = \spec{\eps}{A}$. If $M(A,\Gamma)$ is $\lambda$-regular for $\lambda = \eps^2/150$, and $|\spec{\eps^2/2}{A}|\approx |\Gamma|$, then the proof follows from Lemma~\ref{lemma:regdouble2} and Parseval's identity. Otherwise, one of the following two cases must occur. The first case that could occur is that $M(A,\Gamma)$ is not $\lambda$-regular. In this case, by Lemma~\ref{lemma:minor} we can replace $A$, $\Gamma$ with $A^\prime$,$\Gamma^\prime$ and increase $\eps$ by a noticeable amount. This can not occur many times as $\eps\leq 1$. The other case that can occur is that $|\spec{\eps^2/2}{A}|\gg |\Gamma| \approx \spec{\eps}{A}$. In this case, we set $\eps = \eps^2/2$ and increase the spectrum of $A$. Since the spectrum is bounded by $|G|$, this also can not occur too many times. In the following we formalize this high level argument.

Let $K=K(\eps,\delta)$ be a parameter to be optimized later.
Define a sequence of sets $A_i\subseteq A$ and parameters $\rho_i\in [0,1]$ for $i\geq 1$, and initialize $A_0 = A$ and $\rho_0 = \eps$. Recall that $\delta$ is a parameter, chosen so that the final doubling constant
is bounded by $|G|^{\delta}$. Given $A_i,\rho_i$ set $\lambda_i = \rho_i^2/150$ and run the following procedure:
\begin{enumerate}
\item[(i)] If $M(A_i,\spec{\rho_i}{A_i})$ is $\lambda_i$-regular and $|\spec{\rho_i^2/2}{A_i}| \le K |\spec{\rho_i}{A_i}|$, then set $A^*=A_i$ and finish.
\item[(ii)] If $M(A_i,\spec{\rho_i}{A_i})$ is not $\lambda_i$-regular then apply Lemma~\ref{lemma:minor} to $A_i,\spec{\rho_i}{A_i}$.  Let $A' \subseteq A_i, \Gamma' \subseteq \spec{\rho_i}{A_i}$ be the resulting sets such that $|A'| \ge c \lambda_i^{15} |A_i|$, $|\Gamma'| \ge c \lambda_i^{15} |\Gamma_i|$ and
    $|\E[M(A',\Gamma')]| \ge \rho_i + c \lambda_i^{15}$. Set $A_{i+1}=A'$
    and $\rho_{i+1} = \rho_i + (c/2) \lambda_i^{15}$.
\item[(iii)] If $|\spec{ \rho_i^2/2}{A_i}| > K  |\spec{\rho_i}{A_i}|$ then set $A_{i+1}=A_i$ and $\rho_{i+1}= \rho_i^2/2$.
\end{enumerate}

The analysis of this procedure is similar to the analysis of the procedure in the proof of Theorem~\ref{thm:spec_dbl}. First note that if the procedure ends with $A^* = A_i$ and $\eps^* = \rho_i$ then by Lemma~\ref{lemma:regdouble2} we have that
\begin{equation}\label{eq:spec_bounds_interim2}
|\spec{\eps^*}{A^*}-\spec{\eps^*}{A^*}|\leq 2K|\spec{{\eps^*}^2/2}{A^*}|\leq 2K^2 |\spec{\eps^*}{A^*}|.
\end{equation}

Therefore, we need to show that $\eps^*$ and $|A^*|$ are not too small. Suppose that stages (ii) and (iii) occur $k_1$ and $k_2$ times, respectively. Let $\eta:\{1,\cdots,k_2\}\rightarrow \{1,\cdots,k_1+k_2\}$ be the ordered indices of occurrences of stage (iii). We first bound $k_1$.

\begin{claim}\label{claim:k0'}
If $i<\eta(j)$ then $\rho_i\geq (\eps/2)^{2^{j}}$.
\end{claim}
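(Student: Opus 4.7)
The plan is to mirror the argument of Claim~\ref{claim:k0}, with the one change that step (iii) now replaces $\rho_i$ by $\rho_i^2/2$ rather than $\eps \rho_i/2$. Iterating a squaring-type map produces doubly exponential decrease in the number of applications, which is exactly what yields the $2^j$ in the exponent.

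First I would prove, by induction on $i$, that $\rho_i \ge r_{m(i)}$, where $m(i)$ denotes the number of applications of step (iii) that precede iteration $i$, and $(r_k)$ is the auxiliary sequence defined by $r_0 = \eps$ and $r_{k+1} = r_k^2/2$. The inductive step splits into the two cases for iteration $i$: in step (ii) we have $\rho_{i+1} = \rho_i + (c/2)\lambda_i^{15} \ge \rho_i \ge r_{m(i)} = r_{m(i+1)}$, while in step (iii) we have $\rho_{i+1} = \rho_i^2/2 \ge r_{m(i)}^2/2 = r_{m(i+1)}$, using monotonicity of $x \mapsto x^2/2$ on $[0,\infty)$. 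In other words, step (ii) applications can only help and may be ignored when deriving a lower bound.

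Next I would solve the recurrence $r_{k+1} = r_k^2/2$ explicitly. A quick induction gives $r_k = 2(\eps/2)^{2^k}$: the base case $r_0 = 2(\eps/2) = \eps$ is immediate, and $r_{k+1} = \bigl(2(\eps/2)^{2^k}\bigr)^2/2 = 2(\eps/2)^{2^{k+1}}$. Since $i < \eta(j)$ forces $m(i) \le j-1$, and since $r_k < 1$ for all $k$ so $(r_k)$ is decreasing, we get $\rho_i \ge r_{j-1} = 2(\eps/2)^{2^{j-1}}$. The stated bound then follows from $2(\eps/2)^{2^{j-1}} \ge (\eps/2)^{2^j}$, which reduces to $(\eps/2)^{2^{j-1}} \le 2$ and holds trivially since $\eps/2 < 1$.

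There is no real obstacle; the argument is bookkeeping parallel to Claim~\ref{claim:k0}. The only point worth pausing at is verifying that the squaring update is monotone so that intermediate step (ii) applications genuinely cannot decrease $\rho_i$ below the value given by iterating step (iii) alone, but this is immediate from $x \mapsto x^2/2$ being increasing on $[0,\infty)$ together with $\rho_i \in [0,1]$ throughout the process.
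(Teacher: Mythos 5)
Your proposal is correct and follows essentially the same route as the paper: ignore the increasing step~(ii) updates, iterate the recurrence $\rho \mapsto \rho^2/2$ at most $j-1$ times, and conclude. The paper's proof is a one-line sketch; you supply the explicit solution $r_k = 2(\eps/2)^{2^k}$ of the recurrence and correctly verify the (slightly non-obvious) inequality $2(\eps/2)^{2^{j-1}} \ge (\eps/2)^{2^j}$ needed to match the stated bound, which the paper glosses over.
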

\begin{proof}
The value of $\rho_i$ increases in step (ii), and decreases in step (iii). If $i < \eta(j)$ then we applied step (iii) at most $j-1$ times, hence $\rho_i \ge (\eps/2)^{2^{j}}$.
\end{proof}

\begin{claim}\label{claim:k1'}
For $\forall j\in \{1,\dots,k_2-1\}$, $\left| \eta(j+1) - \eta(j)  \right| \le (1/\eps)^{O(2^j)}$.
\end{claim}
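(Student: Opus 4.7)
The plan is to mirror the proof of Claim~\ref{claim:k1} from the earlier procedure, but track the parameter $\lambda_i = \rho_i^2/150$ (rather than $\eps\rho_i/150$) and plug in the doubly-exponential lower bound on $\rho_i$ from Claim~\ref{claim:k0'}. Between indices $\eta(j)$ and $\eta(j+1)$ only step (ii) is executed, and each such step increases $\rho_i$ additively by $(c/2)\lambda_i^{15} = c' \rho_i^{30}$, where $c'>0$ is an absolute constant.

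The next step is to obtain a uniform lower bound on $\rho_i$ over the relevant range. For every index $i$ with $\eta(j) \le i < \eta(j+1)$ we have $i < \eta(j+1)$, so Claim~\ref{claim:k0'} (applied with $j+1$ in place of $j$) gives $\rho_i \ge (\eps/2)^{2^{j+1}}$. Consequently, each step (ii) executed in this range produces
\[
\rho_{i+1} - \rho_i \;\ge\; c' \rho_i^{30} \;\ge\; c' (\eps/2)^{30 \cdot 2^{j+1}}.
\]

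Since $\rho_i \le 1$ throughout the procedure (as $|M_{a,\gamma}| \le 1$ implies $|\E[M]| \le 1$), the number of step (ii) iterations between the $j$-th and $(j+1)$-th applications of step (iii) is at most
\[
\frac{1}{c' (\eps/2)^{30 \cdot 2^{j+1}}} \;=\; (1/c')\,(2/\eps)^{30 \cdot 2^{j+1}}.
\]
Using the standing assumption $\eps < 1/2$, this is bounded by $(1/\eps)^{O(2^j)}$, which yields the claim. I do not anticipate any serious obstacle here — the argument is a direct analogue of Claim~\ref{claim:k1}, with the only subtlety being that the iterated squaring in step (iii) now forces the doubly-exponential dependence $2^j$ in the exponent, rather than the single-exponential $j$ one obtains in the proof of Theorem~\ref{thm:spec_dbl}.
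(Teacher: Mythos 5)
Your argument matches the paper's proof of Claim~\ref{claim:k1'}: bound the additive increment of step (ii) by $c'\rho_i^{30}$, lower-bound $\rho_i$ on the relevant range via Claim~\ref{claim:k0'}, and divide. The only difference is that you correctly apply Claim~\ref{claim:k0'} with index $j+1$ to get $\rho_i \ge (\eps/2)^{2^{j+1}}$, while the paper writes $(\eps/2)^{30\cdot 2^j}$ in the displayed bound (off by a factor $2$ in the exponent), but this is immaterial since $2^{j+1} = O(2^j)$.
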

\begin{proof}
Consider a step $i$ for $\eta(j) \leq i \leq \eta(j+1)$. We have that $\rho_{i+1} \ge \rho_i + c (\rho_i^2)^{15} \ge \rho_i + c ((\eps/2)^{30 \cdot 2^j})$.
As $\rho_{i}$ never exceeds $1$ for all $i$, this process cannot repeat more than $(1/c) (2/\eps)^{30 \cdot 2^j}$ times.
\end{proof}

\begin{corollary}\label{cor:bound_k1'}
$k_1 \le (1/\eps)^{O(2^{k_2})}$.
\end{corollary}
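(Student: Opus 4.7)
The plan is to mimic the proof of Corollary~\ref{cor:bound_k1} in the first theorem, but now using the weaker bound of Claim~\ref{claim:k1'}. Since $k_1$ counts the total number of applications of step (ii), and step (iii) is applied at iterations $\eta(1),\ldots,\eta(k_2)$, we have
\[
k_1 \le \sum_{j=0}^{k_2} \bigl(\eta(j+1)-\eta(j)\bigr),
\]
with the usual conventions $\eta(0)=0$ and $\eta(k_2+1)=k_1+k_2+1$. Claim~\ref{claim:k1'} bounds each term in this sum by $(1/\eps)^{O(2^j)}$ (the boundary terms are handled analogously, with the same style of estimate).

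The only step is then to observe that this sum is dominated by its last term, since the bounds grow doubly exponentially in $j$. Concretely,
\[
k_1 \le \sum_{j=0}^{k_2} (1/\eps)^{O(2^j)} \le (k_2+1)\cdot (1/\eps)^{O(2^{k_2})} \le (1/\eps)^{O(2^{k_2})},
\]
where in the last inequality we absorb the polynomial factor $k_2+1$ into the exponent (using $\eps<1/2$ so that $1/\eps>1$). This yields the claimed bound.

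There is no real obstacle; the corollary is a direct consequence of Claim~\ref{claim:k1'} via the geometric-type summation, exactly parallel to how Corollary~\ref{cor:bound_k1} follows from Claim~\ref{claim:k1}. The only thing to keep in mind is that the doubly-exponential dependence here (as opposed to the singly-exponential dependence in Claim~\ref{claim:k1}) is precisely what leads to the $2^{k_2}$ in the exponent, and eventually to the much worse final dependence $\exp((1/\eps)^{O(2^{4/\delta})})$ appearing in the statement of Theorem~\ref{thm:spec_dbl2}.
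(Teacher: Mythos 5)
Your proof is correct and matches the paper's approach exactly: both decompose $k_1$ as a sum over the gaps $\eta(j+1)-\eta(j)$, apply Claim~\ref{claim:k1'} termwise, and observe that the doubly-exponential growth makes the last term dominate. The extra remarks about boundary conventions and absorbing the polynomial factor $k_2+1$ just make explicit what the paper leaves implicit in the one-line chain $k_1 \le \sum_{j=1}^{k_2} (1/\eps)^{O(2^j)} \le (1/\eps)^{O(2^{k_2})}$.
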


\begin{proof}
By claim~\ref{claim:k1'}, $k_1 \le \sum_{j=1}^{k_2} (1/\eps)^{O(2^j)} \le (1/\eps)^{O(2^{k_2})}$.
\end{proof}

We next upper bound $k_2$. To do so, we will show that in step (ii) we have that $\spec{\rho_{i+1}}{A_{i+1}}$ is not much smaller than $\spec{\rho_{i}}{A_{i}}$.
We restate Claim~\ref{clm:Gamma_size_step_2} which was proved before.
\begin{claim}\label{clm:Gamma_size_step_2'}
Assume that we run step (ii) in iteration $i$. Then
$$
|A_{i+1}| \ge c \lambda_i^{15}  \cdot |A_{i}|
$$
and
$$
|\spec{\rho_{i+1}}{A_{i+1}}| \ge c \lambda_i^{30}  \cdot |\spec{\rho_{i}}{A_{i}}|.
$$
\end{claim}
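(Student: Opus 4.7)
The plan is to observe that Claim~\ref{clm:Gamma_size_step_2'} is stated purely in terms of $\lambda_i$ and does not depend on how $\lambda_i$ is defined in the two iterative procedures (here $\lambda_i=\rho_i^2/150$ versus $\lambda_i = \eps\rho_i/150$ in Section~\ref{sec:proof1}). Consequently the proof is identical to that of Claim~\ref{clm:Gamma_size_step_2} and I would simply re-run it.

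First I would apply Lemma~\ref{lemma:minor} to $M(A_i,\spec{\rho_i}{A_i})$, which under the hypothesis of step~(ii) is not $\lambda_i$-regular. This gives sets $A_{i+1}\subseteq A_i$ and $\Gamma'\subseteq\spec{\rho_i}{A_i}$ with
$$
|A_{i+1}|\ge c\lambda_i^{15}|A_i|,\qquad |\Gamma'|\ge c\lambda_i^{15}|\spec{\rho_i}{A_i}|,\qquad |\E[M(A_{i+1},\Gamma')]|\ge \rho_i + c\lambda_i^{15}.
$$
The first displayed inequality in the claim is then immediate from the first bound above.

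For the second inequality, I would introduce $S := \Gamma'\cap\spec{\rho_{i+1}}{A_{i+1}}$ and argue by contradiction on its size. By the definition of $\spec{\rho_{i+1}}{A_{i+1}}$, every $\gamma\in\Gamma'\setminus S$ contributes an entry to $M(A_{i+1},\Gamma')$ of magnitude at most $\rho_{i+1}$, while entries in $S$ are bounded trivially by $1$. Averaging gives
$$
|\E[M(A_{i+1},\Gamma')]|\le \frac{|S|}{|\Gamma'|} + \Bigl(1-\frac{|S|}{|\Gamma'|}\Bigr)\rho_{i+1}.
$$
Substituting $\rho_{i+1}=\rho_i+(c/2)\lambda_i^{15}$ and comparing with the lower bound $\rho_i+c\lambda_i^{15}$ on $|\E[M(A_{i+1},\Gamma')]|$ forces $|S|/|\Gamma'|\gtrsim \lambda_i^{15}$, which combined with the lower bound on $|\Gamma'|$ yields $|\spec{\rho_{i+1}}{A_{i+1}}|\ge |S|\gtrsim \lambda_i^{30}|\spec{\rho_i}{A_i}|$, as claimed.

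Since the statement and all intermediate inequalities are formally identical to those in Claim~\ref{clm:Gamma_size_step_2}, there is no real obstacle — the only mildly subtle point is the counting argument with $S$, where one must remember that the gap between $|\E[M(A_{i+1},\Gamma')]|$ and $\rho_{i+1}$ is exactly $(c/2)\lambda_i^{15}$, which is what produces the $\lambda_i^{15}$ factor in $|S|/|\Gamma'|$ and ultimately the $\lambda_i^{30}$ in the second bound. Thus I would present the claim as a direct restatement, noting that its proof is identical to the earlier one and can be omitted or reproduced verbatim.
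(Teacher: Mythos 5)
Your proposal is correct and matches the paper's own argument: Claim~\ref{clm:Gamma_size_step_2'} is stated purely in terms of $\lambda_i$, so the proof of Claim~\ref{clm:Gamma_size_step_2} carries over verbatim, and you reproduce it exactly (apply Lemma~\ref{lemma:minor}, set $S=\Gamma'\cap\spec{\rho_{i+1}}{A_{i+1}}$, and compare the averaging bound $|\E[M(A_{i+1},\Gamma')]|\le |S|/|\Gamma'| + (1-|S|/|\Gamma'|)\rho_{i+1}$ against the lower bound $\rho_i+c\lambda_i^{15}$). One small wording nit: the individual entries of $M$ all have modulus $1$; what is bounded by $\rho_{i+1}$ for $\gamma\notin S$ is the \emph{column average} $|\gamma(A_{i+1})|$, which is what the displayed averaging inequality actually encodes.
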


As in the proof of Theorem~\ref{thm:spec_dbl}, if we combine Claim~\ref{claim:k1'} and Claim~\ref{clm:Gamma_size_step_2'},
then for any $j \in \{1,\ldots,k_2-1\}$, the ratio in the size of the spectrums
immediately after the $j$-th application of step (iii), and immediately before the $j+1$ application of step (iii), is lower bounded by
\begin{align*}
T_j &:= \frac{|\spec{\rho_{\eta(j)}}{A_{\eta(j)}}|}{|\spec{\rho_{\eta(j+1)-1}}{A_{\eta(j+1)-1}}|} \le \exp\left((1/\eps)^{O(2^{j})} \right).
\end{align*}
We will choose $K$ large enough so that $T_j \le K^{1/2}$ for all $j < k_2$, and hence
$$
|\spec{\rho_{\eta(j+1)}}{A_{\eta(j+1)}}| \ge K \cdot |\spec{\rho_{\eta(j+1)-1}}{A_{\eta(j+1)-1}}| \ge K^{1/2} \cdot |\spec{\rho_{\eta(j)}}{A_{\eta(j)}}|.
$$
Fix $K=|G|^{\delta/2}$ and $C=\exp((1/\eps)^{O(2^{4/\delta})})$. We may assume that $|G| \ge C$, as otherwise our bounds are trivial.
Then we deduce that $k_2 \le 4/\delta$, $k_1 \le (2/\eps)^{O(2^{4/\delta})}$. We get that
$$
\frac{|A|}{|A^*|} \le \prod_{i=1}^{k_1+k_2} \frac{1}{c  (\lambda_i)^{15}} = \exp\left((1/\eps)^{O(2^{4/\delta})}\right)
$$
and then by plugging these estimates into Equation~\eqref{eq:spec_bounds_interim2} we conclude that
$$
\left| \spec{\eps^*}{A^*} - \spec{\eps^*}{A^*}\right| \le \exp\left((1/\eps)^{O(2^{4/\delta})}\right) \left|G\right|^\delta \cdot \left|\spec{\eps^*}{A^*}\right|.
$$
Since the definition of the spectrum is symmetric, $\spec{\eps^*}{A^*}=-\spec{\eps^*}{A^*}$, this implies the same bounds on $\left| \spec{\eps^*}{A^*} + \spec{\eps^*}{A^*}\right|$.

\bibliographystyle{unsrt}
\bibliography{fuzzy}

\end{document}